\newtheorem{theorem}{Theorem}
\newtheorem{lemma}{Lemma}
\newtheorem{proposition}{Proposition}
\newtheorem{corollary}{Corollary}
\theoremstyle{definition}
\theoremstyle{remark}
\newtheorem{remark}{Remark}
\newtheorem*{ack}{\bf Acknowledgements}
\numberwithin{equation}{section}
\begin{document}
	\setcounter{page}{1}

	\title[Necessary And Sufficient Conditions for Absolute Monotonicity]{Necessary and Sufficient Conditions for Absolute Monotonicity of Functions Related to Gaussian Hypergeometric Functions}

	\author[Tiehong Zhao]{Tiehong Zhao}
	
	\address{Tiehong Zhao, School of mathematics, Hangzhou Normal University, Hangzhou 311121, Zhejiang, China}
	\email{\textcolor[rgb]{0.00,0.00,0.84}{tiehong.zhao@hznu.edu.cn}}
	\urladdr{https://orcid.org/0000-0002-6394-1049}

	
	\subjclass[2010]{Primary 33E05; 26A48; Secondary 40A05.}
	
	\keywords{Gaussian hypergeometric function; Absolute monotonicity; Jurkat’s criterion; Power series}
	
	
	\thanks{$^{\dag}$Corresponding Author: Tiehong \textsc{Zhao}}
	
	\begin{abstract}
		This paper systematically investigates the absolute monotonicity of two function families associated with the Gaussian hypergeometric function $F(a, b; c; x)$ (where $a,b,c\in\mathbb{R}_+$): $\mathcal{F}_p(x)=(1-x)^pF(a,b;c;x)$ and $\mathcal{G}_p(x)=(1-x)^p \exp(F(a,b;c;x))$, as well as the logarithmic transform $\ln\mathcal{F}_p(x)$.
		Our primary goal is to establish necessary and sufficient conditions for the parameter $p$ such that $-\mathcal{F}'_p$, $\pm\mathcal{G}'_p$ and $\pm(\ln\mathcal{F}_p)'$ are absolutely monotonic on $(0,1)$. Additionally, we derive several results regarding the absolute monotonicity of their higher-order derivatives. As applications, we derive several new inequalities for the Gaussian hypergeometric function $F(a,b;c;x)$. Most importantly, we develop a novel constructive approach based on Jurkat's criterion for power series ratios, which avoids limitations of cumbersome recursive/inductive methods in existing literature.
	\end{abstract} \maketitle
	
	\section{Introduction}
	
Throughout this paper, we denote by $\mathbb{N}$ (resp. $\mathbb{R}_+$) the set of positive integers (resp. positive real numbers), and let $\mathbb{N}_0=\mathbb{N}\cup\{0\}$. 	For $x,y\in\mathbb{R}_+$, we define the Euler gamma function $\Gamma(x)$, beta function $B(x,y)$, psi function $\psi(x)$, Euler-Mascherioni constant $\gamma$, and Ramanujan $R$-function $R(a,b)$ \cite{CCHC-OM-2022} as follows:
\begin{align*}
\Gamma(x)&=\int_{0}^\infty t^{x-1}e^{-t}dt,\qquad B(x,y)=\int_{0}^{1}t^{x-1}(1-t)^{y-1}=\frac{\Gamma(x)\Gamma(y)}{\Gamma(x+y)},\\
\psi(x)&=\frac{\Gamma'(x)}{\Gamma(x)},\hspace{2.25cm} \gamma=\lim_{n\to\infty}\left(\sum_{k=1}^{n}\frac{1}{k}-\ln n\right)=0.5772\cdots
\end{align*}
and
\begin{equation*}
	R(a,b)=-2\gamma-\psi(a)-\psi(b)
\end{equation*}with a know special case $ R\left(\frac{1}{2},\frac{1}{2}\right)=4\ln2$.

For complex numbers $a,b$ and $c$ (where $-c\notin\mathbb{N}_0$), the Gaussian hypergeometric function (denoted as ${_2F_1(a,b;c;x)}$ in standard notation)  is defined on the interval $(0,1)$ by the power series \cite{AVV-NY-1997}
\begin{equation*}
	F(a,b;c;x)=\sum_{n=0}^{\infty}\frac{(a)_n(b)_n}{(c)_n}\frac{x^n}{n!},
\end{equation*}
where $(a)_n=\Gamma(n+a)/\Gamma(a)$ is the shifted factorial function (also known as Pochhammer symbol) for $n\in\mathbb{N}$. Here,  (with $\mathrm{Re}(x)>0$) is the gamma function. A notable subclass of Gaussian hypergeometric functions arises when $c=a+b$, such functions are called {\it zero-balanced} hypergeometric functions.  The behavior of $F(a,b;c;x)$ near $x=1^-$ (i.e., as $x$ approaches $1$ from the left) is well-characterized by three cases:
\begin{itemize}[leftmargin=1.6em]
	\item $c>a+b$ (c.f. \cite[p.49]{Rain-NY-1960}):\\
	The function converges to a finite value given by the ratio of gamma functions
	\begin{equation}\label{c>a+b}
		F(a,b;c;1^-)=\frac{\Gamma(c)\Gamma(c-a-b)}{\Gamma(c-a)\Gamma(c-b)}.
	\end{equation}
	\item $c=a+b$ (c.f. \cite[Sec. 1.48]{AVV-NY-1997}):\\
	In this case, Ramanujan's asymptotic formula describes the behavior as $x\to1^-$,
	\begin{equation}\label{c=a+b}
	B(a,b)F(a,b;c;x)+\ln(1-x)=R(a,b)+O[(1-x)\ln(1-x)].
	\end{equation}
	\item $c<a+b$ (c.f. \cite[15.3.3]{AS-Wash-1964}):\\
	The function exhibits a singular behavior near $x=1$, with the leading term determined b
	\begin{equation}\label{c<a+b}
		F(a,b;c;x)=\frac{\Gamma(c)\Gamma(a+b-c)}{\Gamma(a)\Gamma(b)}(1-x)^{c-a-b}(1+o(1)).
	\end{equation}
	A useful symmetry relation for this case is
	\begin{equation}\label{sym-relation}
		F(a,b;c;x)=(1-x)^{c-a-b}F(c-a,c-b;c;x).
	\end{equation}
\end{itemize}

As the special case of zero-balanced hypergeometric function, the Legrendre's complete elliptic integral of the first kind (with the modulus $r\in(0,1)$)  can be explicitly expressed in terms of $F(a,b;c;x)$  [15, 16]. This integral, which plays a central role in geometric function theory, elasticity, and number theory, is defined as:
\begin{equation}
	\mathcal{K}(r)=\int_{0}^{\pi/2}(1-r^2\sin \theta)^{-1/2}d\theta=\frac{\pi}{2}F\left(\frac{1}{2},\frac{1}{2};1;r^2\right).
\end{equation}
Recent years have seen growing interest in the absolute monotonicity of functions derived from Gaussian hypergeometric functions (and their special cases like elliptic integrals), driven by its applications in proving sharp inequalities, characterizing convex/concave functions, and analyzing asymptotic behavior (see \cite{CWZ-RCSM-2023,CZ-RIM-2022,HQM-JMAA-2019,TY-RCSM-2023,TY-RIM-2022,Yang-MIA-21-2018,ZWC-RCSM-2021}). A function $f(x)$ is said to be absolutely monotonic \cite{Wid-Prin-1941} (``AM'' for short) on an interval $I$ if all its derivatives $f^{(k)}(x)$ exist and satisfy $f^{(k)}(x) \geq 0$ for all $x \in I$ and $k\in\mathbb{N}_{0}$.

Early work in this area focused on functions related to $\mathcal{K}(r)$. In 2022, Yang and Tian \cite{YT-AMS-2022} studied the absolute monotonicity of $f_p(x)=(1-x)^p\mathcal{K}(\sqrt{x})$ and $g_p(x)=(1-x)^p\exp(\mathcal{K}(\sqrt{x}))$, proving:
\begin{itemize}[leftmargin=2em]
	\item $-f'_p$ is absolutely monotonic on $(0,1)$ if and only if $\frac{1}{4}\leq p\leq1$;
	\item $-\left(\ln f_p\right)'$ is absolutely monotonic on $(0,1)$ if and only if $p\geq\frac{1}{4}$; 
	\item In a subsequent paper \cite{YT-JMI-2021}, they further showed that $g'_p$ is absolutely monotonic on $(0,1)$ if and only if $p\leq \pi/8$, whilie $-g'_p$ is 
	absolutely monotonic on $(0,1)$ if and only if $\frac{1}{2}\leq p\leq \frac{\pi+4+\sqrt{16-\pi}}{8}$.
\end{itemize} 
These results have since been extended to more general zero-balanced hypergeometric functions. For instance: 
\begin{itemize}[leftmargin=2.8em]
\item Wu and Zhao \cite{WZ-IJPAM-2025} studied the power series expansion of $(1-x)^{p} F(a, b ; a+b ; x)$, deriving a sufficient condition for $-\left[(1-x)^{p} F(a, b ; a+b ; x)\right]'$ to be absolutely monotonic on $(0, 1)$;
\item They also provided necessary and sufficient conditions for the absolute monotonicity of $\ln\left[(1-x)^{p} F(a, b ; c ; x)\right]$ (and its higher derivatives) in \cite{WZ-BIMS-2024}, though the resulting parameter inequalities are computationally cumbersome;
\item Very recently, Sun, Wang and Huang \cite{SWH-JIA-2025} proved that  $(1-x)^{p} \exp(F(a, b ; a+b ; x))$ is absolutely monotonic on $(0, 1)$ if and only if $p \leq \frac{ab}{a+b}$, and the function $-\left[(1-x)^{p} \exp(F(a, b ; a+b ; x))\right]'$ is absolutely monotonic if and only if $\frac{\Gamma(a+b)}{\Gamma(a)\Gamma(b)}\leq p\leq \frac{ab}{a+b}+\frac{1}{2}\left[1+\sqrt{\frac{a^2+b^2-4a^2b^2+2ab+a+b}{(a+b)(a+b+1)}}\right]$. However, this result is restricted to $a,b\in (0, 1]$, limiting its applicability.
\end{itemize} 

Existing studies on absolute monotonicity of hypergeometric-derived functions suffer from two key limitations:
\begin{itemize}[leftmargin=2em]
	\item[1.] {\it Cumbersome Proof Techniques:} Most works \cite{SWH-JIA-2025,WZ-BIMS-2024,WZ-IJPAM-2025} rely on recursive coefficient formulas and mathematical induction to analyze the sign of power series coefficients. This approach is not only technically complicated but also forces restrictive parameter ranges to satisfy induction hypotheses, narrowing the validity of results.
	\item[2.] {\it Limited Generality:} Results are often confined to specific subclasses (e.g., zero-balanced functions with $a,b\in(0,1]$) or lack unified conditions for absolute monotonicity of higher derivatives.
\end{itemize}

To address these gaps, we focus on two general families of functions defined on $(0, 1)$ with $a,b,c\in\mathbb{R}_+$ by 
\begin{equation*}
	\mathcal{F}_p(x)=(1-x)^pF(a,b;c;x)\quad \text{and}\quad \mathcal{G}_p(x)=(1-x)^p \exp(F(a,b;c;x)).
\end{equation*}
Our core contribution is a novel, constructive method to prove absolute monotonicity, leveraging Jurkat’s criterion \cite{Jurkat-PAMS-1954} for the ratio of power series. The method proceeds in four key steps:
\vspace{-0.2cm}
\begin{itemize}[leftmargin=2.5em]
	\item[(i)] Rewrite $\mathcal{F}_p(x),$ $\mathcal{G}_p(x)$ or $\ln\mathcal{F}_p(x)$ as a ratio of  two power series: $\dfrac{\sum_{n=0}^{\infty}u_nx^n}{\sum_{n=0}^{\infty}v_nx^n}$;
	\vspace{-0.18cm}
	\item[(ii)] Prove the sequence $\left\{v_{n+1}/v_n\right\}_{n\geq0}$ is  increasing;
	\item [(iii)] Prove the sequence $\left\{u_n/v_n\right\}_{n\geq0}$  is increasing or decreasing (the most challenging step, which we solve by constructing an auxiliary absolutely monotonic function);
	\item[(iv)] Apply Jurkat’s criterion to conclude the absolute monotonicity of  $\pm\mathcal{F}'_p,$ $\pm\mathcal{G}'_p$ and $\pm\left(\ln\mathcal{F}_p\right)'$.
\end{itemize}
This approach avoids the complexity of induction and recursive coefficient analysis, enabling us to derive necessary and sufficient conditions for absolute monotonicity (and higher derivatives) of $\mathcal{F}_{p}(x)$ and $\mathcal{G}_{p}(x)$ over broad parameter ranges of $a, b, c$. As applications, we also establish sharp inequalities for Gaussian hypergeometric functions and extend the parameter validity of existing inequalities.

The rest of this paper is as follows.  In Section 2, we present foundational tools (Jurkat’s criterion and a new test function for sequence monotonicity) and technical lemmas, including analyses of parameter conditions and several absolute monotonicity. In Section 3, we will derive the main results: necessary and sufficient conditions for the absolute monotonicity of $-\mathcal{F}_{p}'$, $\pm \mathcal{F}_{p}''$, $\pm\mathcal{G}_{p}'$, and $\pm(\ln\mathcal{F}_{p})^{(k)}$ ($k \geq 1$). Applying the main results, Section 4 closes the paper by establishing new inequalities for Gaussian hypergeometric functions and improving the parameter range of existing inequalities.

\section{Preliminary}

In this section, we present two key tools and several technical lemmas to establish the absolute monotonicity of functions involving Gaussian hypergeometric functions. 

\subsection{Tools}
The core tool for analyzing power series ratios is Jurkat’s criterion \cite{Jurkat-PAMS-1954}, which we restate with refined clarity:
\begin{proposition}{\bf (Jurkat's criterion)}\label{prop-Jurkat}
	Let $p(x)=\sum_{n=0}^{\infty} p_nx^n$ and $q(x)=\sum_{n=0}^{\infty}q_nx^n$ be power series convergent on $(0,r)$ with $q_n>0$ for all $n\geq0$. Suppose that the sequence $\{q_{n+1}/q_n\}_{n\geq0}$ is increasing.  If the sequence $\{p_n/q_n\}_{n\geq0}$ is increasing (resp. decreasing), then the function $(p/q)'$ (resp. $-(p/q)'$) is absolutely monotonic on $(0,r)$.
\end{proposition}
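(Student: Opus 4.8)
The plan is to deduce everything from a single assertion about Maclaurin coefficients: after a harmless normalization, the ratio $p/q$ itself has nonnegative Taylor coefficients, so $p/q$ is absolutely monotonic and therefore so is $(p/q)'$. Two preliminary reductions set this up. First, if $\{p_n/q_n\}$ is decreasing, replace $p$ by $-p$; then $\{(-p_n)/q_n\}$ is increasing and $-(p/q)'=\big((-p)/q\big)'$, so only the increasing case needs treatment. Second, replace $p$ by $q_0p-p_0q$: this changes $p/q$ only by the positive scalar $q_0$ and the additive constant $-p_0/q_0$, hence does not affect whether $(p/q)'$ is absolutely monotonic, while the new coefficients are $q_0q_n(p_n/q_n-p_0/q_0)\ge0$, still have increasing ratios over $q_n$, and have vanishing constant term. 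So we may assume $p_0=0$, $p_n\ge0$, and $r_n:=p_n/q_n$ increasing with $r_0=0$; it then suffices to prove $[x^n](p/q)\ge0$ for every $n$, since a power series with nonnegative coefficients is absolutely monotonic on its interval of convergence, as is its derivative, and this propagates to all of $(0,r)$ because $q>0$ there.

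For the core estimate I would apply Abel summation to $\{r_n\}$. Writing $r_n=\sum_{m=1}^n(r_m-r_{m-1})$ and swapping the order of summation,
\begin{equation*}
	p(x)=\sum_{n\ge0}q_nr_nx^n=\sum_{m\ge1}(r_m-r_{m-1})\,x^mQ_m(x),\qquad Q_m(x):=\sum_{k\ge0}q_{m+k}x^k,
\end{equation*}
so that $p/q=\sum_{m\ge1}(r_m-r_{m-1})\,x^mQ_m(x)/q(x)$. As $r_m-r_{m-1}\ge0$, it is enough to show that $R_m:=x^mQ_m/q$ has nonnegative Maclaurin coefficients for each $m\ge1$. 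Since $x^mQ_m(x)=q(x)-\sum_{k=0}^{m-1}q_kx^k$, one has $R_{m+1}=R_m-q_mx^m\,q(x)^{-1}$, and I would induct on $m$ starting from $R_0=1$:
\begin{equation*}
	[x^n]R_{m+1}=[x^n]R_m-q_m\,[x^{\,n-m}]\big(q(x)^{-1}\big),
\end{equation*}
which is $\ge0$ for all $n$ \emph{provided} $[x^j]\big(q(x)^{-1}\big)\le0$ for every $j\ge1$ (the degree-$m$ term equals $q_m/q_0-q_m/q_0=0$, matching that $R_{m+1}$ vanishes to order $m+1$).

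Thus the whole argument reduces to one classical fact, which is where I expect the genuine difficulty to lie: \emph{if $q_n>0$ for all $n$ and $\{q_{n+1}/q_n\}$ is increasing (equivalently $q_n^2\le q_{n-1}q_{n+1}$, i.e.\ $\{q_n\}$ is log-convex), then $q(x)^{-1}=\sum_{n\ge0}c_nx^n$ satisfies $c_0>0$ and $c_n\le0$ for all $n\ge1$.} This is the Kaluza sign criterion; I would either invoke it directly, or derive it from the convolution identity $\sum_{k=0}^nc_kq_{n-k}=0$ $(n\ge1)$ by induction on $n$, the essential ingredients being the majorization inequality $q_iq_j\ge q_kq_l$ whenever $i+j=k+l$ and $|i-j|\ge|k-l|$ (a consequence of discrete convexity of $n\mapsto\log q_n$) together with the arithmetic--geometric mean inequality. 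The main obstacle is organizing this last induction cleanly, because a naive step leads to increasingly involved polynomial inequalities in the $q_n$; the standard way around this is to strengthen the inductive hypothesis — for instance by also tracking the sign of auxiliary quantities such as $q_1c_n-q_nc_1$, or the monotonicity of $\{c_{n+1}/c_n\}_{n\ge1}$ — so that the inductive step collapses to a single use of log-convexity. Once the $c_n$ are known to have this sign pattern, the induction on $m$ closes, $p/q$ has nonnegative coefficients, and Jurkat's criterion follows; the decreasing case is identical after the sign change, and higher derivatives are automatically covered since the derivative of an absolutely monotonic function is absolutely monotonic.
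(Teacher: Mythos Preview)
The paper does not supply its own proof of this proposition: it simply restates Jurkat's criterion and cites the original 1954 paper \cite{Jurkat-PAMS-1954}. So there is no in-paper argument to compare against; your proposal is an independent proof of a result the paper takes as a black box.

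Your argument is correct. The two normalizations are harmless, the Abel summation identity
\[
\frac{p}{q}=\sum_{m\ge1}(r_m-r_{m-1})\,R_m,\qquad R_m=\frac{x^mQ_m}{q},
\]
is valid (and the sum defining $[x^n](p/q)$ is finite since $R_m$ vanishes to order $m$), and the telescoping $R_{m+1}=R_m-q_mx^m/q$ reduces everything to Kaluza's sign theorem for the reciprocal of a power series with log-convex positive coefficients. That last result is classical and can legitimately be cited; your sketch of how to prove it by strengthened induction is also the standard route. One point worth stating more precisely is why nonnegativity of the Maclaurin coefficients of $p/q$ forces absolute monotonicity on \emph{all} of $(0,r)$ rather than merely on the a~priori smaller disc where that Taylor series converges: this is Pringsheim's theorem (for a series with nonnegative coefficients the radius of convergence is itself a real singular point), together with the fact that $p/q$ is analytic on $[0,r)$ because $q>0$ there. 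Your phrase ``this propagates to all of $(0,r)$ because $q>0$ there'' is exactly the right idea, but naming Pringsheim would close the gap; without it, a complex zero of $q$ inside $|z|<r$ could in principle shrink the radius.
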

To verify the sequence monotonicity required by Proposition \ref{prop-Jurkat}, we introduce a {\bf test function} to determine the monotonicity of the sequence.
Let \(p \geq 0\) and define two convergent power series on $(0, 1)$:
 \begin{equation}\label{phi-Wn}
 	\phi(x)=\sum_{n=0}^{\infty}V_nx^n\quad \text{and}\quad (1-x)^{-p}=\sum_{n=0}^\infty W_nx^n,
 \end{equation}
where $W_n:=W_n(p)=\frac{(p)_n}{n!}=\frac{\Gamma(n+p)}{\Gamma(p)\Gamma(n+1)}$ (here, $(p)_n$ denotes the Pochhammer symbol: $(p)_0=1$ and $(p)_n=p(p+1)\cdots(p+n-1)$ for $n\geq1$).
 
\begin{proposition}\label{prop-method}
	Define the test function of $\phi(x)$ as: $	\mathcal{T}_p\phi(x)=(1-x)\phi'(x)-p\phi(x)$. Then:
	\begin{itemize}[leftmargin=2em]
		\item The sequence $\{ V_n/W_n \}_{n \geq 0}$ is increasing (resp. decreasing) if and only if $\mathcal{T}_p \phi(x)$\\ (resp. $-\mathcal{T}_p \phi(x)$) is absolutely monotonic on $(0, 1)$.
		\item  In particular, $\{n V_n\}_{n\geq0}$ is increasing (resp. decreasing) if and only if the function $\mathcal{T}_0\phi(x)$ (resp. $-\mathcal{T}_0\phi(x)$) is absolutely monotonic on $(0,1)$. 
	\end{itemize}
\end{proposition}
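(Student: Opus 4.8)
The plan is to translate both equivalences into a single statement about the sign of the Maclaurin coefficients of the test function, and then invoke the elementary characterization that a power series convergent on $(0,1)$ is absolutely monotonic there precisely when all its coefficients are nonnegative. So the real content is an explicit computation of the coefficients of $\mathcal{T}_p\phi$, followed by an algebraic factorization tying them to the difference quotient $V_{n+1}/W_{n+1}-V_n/W_n$.

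First I would expand $\mathcal{T}_p\phi(x)=(1-x)\phi'(x)-p\phi(x)$. Using $\phi'(x)=\sum_{n\ge 0}(n+1)V_{n+1}x^n$ and $x\phi'(x)=\sum_{n\ge 0}nV_nx^n$, one obtains
\[
	\mathcal{T}_p\phi(x)=\sum_{n=0}^{\infty}\bigl[(n+1)V_{n+1}-(n+p)V_n\bigr]x^n ,
\]
a series which again converges on $(0,1)$ since termwise differentiation is legitimate inside the radius of convergence (which is at least $1$ because $\phi$ converges on $(0,1)$), and sums/products of such series stay analytic on $(-1,1)$.

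Next comes the key algebraic step. Since $W_n=(p)_n/n!$ satisfies $W_{n+1}/W_n=(n+p)/(n+1)$, i.e. $(n+1)W_{n+1}=(n+p)W_n$, for $p>0$ (so $W_n>0$ for all $n$) the bracket above factors as
\[
	(n+1)V_{n+1}-(n+p)V_n=(n+p)W_n\left(\frac{V_{n+1}}{W_{n+1}}-\frac{V_n}{W_n}\right).
\]
Because $(n+p)W_n>0$ for every $n\ge 0$, the $n$-th coefficient of $\mathcal{T}_p\phi$ has exactly the sign of $V_{n+1}/W_{n+1}-V_n/W_n$. Hence $\{V_n/W_n\}_{n\ge 0}$ is increasing (resp. decreasing) iff every coefficient of $\mathcal{T}_p\phi$ (resp. of $-\mathcal{T}_p\phi$) is nonnegative, which by the characterization mentioned above is equivalent to $\mathcal{T}_p\phi$ (resp. $-\mathcal{T}_p\phi$) being absolutely monotonic on $(0,1)$. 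For the second bullet one simply sets $p=0$ in the coefficient formula: the $n$-th coefficient becomes $(n+1)V_{n+1}-nV_n$, and its nonnegativity for all $n$ is exactly the assertion that $\{nV_n\}_{n\ge 0}$ is increasing, so the same reasoning gives the claim (the weight $W_n$ being replaced by the trivial one).

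I expect the only genuinely delicate point to be the auxiliary fact that a power series $\sum a_nx^n$ convergent on $(0,1)$ is absolutely monotonic on $(0,1)$ if and only if $a_n\ge 0$ for all $n$. The ``if'' direction is immediate, since each derivative is a nonnegative combination of powers evaluated at $x>0$. For ``only if'', I would note that convergence on $(0,1)$ forces radius of convergence $\ge 1$, so the function is analytic at $0$; then $n!\,a_n=f^{(n)}(0^+)=\lim_{x\to 0^+}f^{(n)}(x)\ge 0$ by continuity and nonnegativity of $f^{(n)}$ on $(0,1)$. Everything else is routine bookkeeping with Pochhammer symbols.
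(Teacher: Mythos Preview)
Your proof is correct and follows essentially the same approach as the paper: both expand $\mathcal{T}_p\phi$ to obtain the coefficient $(n+1)V_{n+1}-(n+p)V_n$, then use the Pochhammer recurrence $(n+1)W_{n+1}=(n+p)W_n$ to factor out a positive weight and expose the difference $V_{n+1}/W_{n+1}-V_n/W_n$ (you write the weight as $(n+p)W_n$, the paper as $(n+1)W_{n+1}$, which are equal). Your treatment is slightly more careful in two respects: you explicitly separate the case $p>0$ (where $W_n>0$) from $p=0$ (where the ratio $V_n/W_n$ is undefined for $n\ge1$ and one works directly with $(n+1)V_{n+1}-nV_n$), and you supply the short argument that absolute monotonicity of a convergent power series on $(0,1)$ is equivalent to nonnegativity of all coefficients, which the paper uses tacitly.
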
 
\begin{proof}
Expand $\mathcal{T}_p\phi(x)$ via power series:
\begin{equation*}
\mathcal{T}_p\phi(x)=(1-x)\sum_{n=1}^{\infty}nV_nx^{n-1}-p\sum_{n=0}^{\infty} V_nx^n=\sum_{n=0}^{\infty}\Big[(n+1)V_{n+1}-(n+p)V_n\Big] x^n.
\end{equation*}
Substitute $W_{n+1}=\frac{(p)_{n+1}}{(n+1)!}=\frac{n+p}{n+1}W_n$ (from Pochhammer recurrence). Rearranging gives
\begin{equation}\label{V_n/W_n}
		\mathcal{T}_p\phi(x)=\sum_{n=0}^\infty(n+1)W_{n+1}\left(\frac{V_{n+1}}{W_{n+1}}-\frac{V_n}{W_n}\right)x^n.
\end{equation}
Since $W_{n+1}>0$ for all $n\geq0$ and $p\geq0$, the sign of $\mathcal{T}_p\phi(x)$'s  coefficients equals the sign of $\frac{V_{n+1}}{W_{n+1}}-\frac{V_n}{W_n}$. Specifically,  $\mathcal{T}_0\phi(x)$'s  coefficients is simplified to $(n+1)V_{n+1}-nV_n$. Thus, the conclusion is valid.
\end{proof}

\subsection{Lemmas} In this subsection, we present four necessary lemmas to prove the main results.
\begin{lemma}\label{lemma-W_{n+1}/W_n}
	The sequence $\{W_{n+1}/W_n\}_{n\geq0}$ is increasing if and only if $p\leq1$.
\end{lemma}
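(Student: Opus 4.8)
The plan is to reduce everything to the explicit ratio of consecutive coefficients already recorded in the proof of Proposition \ref{prop-method}. From the Pochhammer recurrence one has $W_{n+1}=\frac{n+p}{n+1}W_n$, hence $W_{n+1}/W_n=\frac{n+p}{n+1}$ for every $n\geq0$. So the whole question is the monotonicity in $n$ of the elementary sequence $a_n:=\frac{n+p}{n+1}$, and no hypergeometric machinery is needed.

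First I would rewrite $a_n=1+\dfrac{p-1}{n+1}$. Since $\{1/(n+1)\}_{n\geq0}$ is strictly decreasing, the sequence $\{a_n\}_{n\geq0}$ is (non-strictly) increasing exactly when the multiplier $p-1$ is $\leq 0$, i.e. when $p\leq1$; if $p>1$ it is strictly decreasing. Equivalently, one may just compute the consecutive difference
\begin{equation*}
a_{n+1}-a_n=\frac{n+1+p}{n+2}-\frac{n+p}{n+1}=\frac{(n+1+p)(n+1)-(n+p)(n+2)}{(n+1)(n+2)}=\frac{1-p}{(n+1)(n+2)},
\end{equation*}
which is $\geq0$ for all $n\geq0$ if and only if $p\leq1$. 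This settles the ``if'' direction.

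For the ``only if'' direction it suffices to exhibit a single failure when $p>1$: already $a_0=p$ and $a_1=\frac{1+p}{2}$, and $a_0\leq a_1$ forces $2p\leq 1+p$, i.e. $p\leq1$; so $p>1$ breaks monotonicity at the very first step. Combining the two directions gives the claim. I do not expect any real obstacle here — the only point requiring a word of care is the convention that ``increasing'' is understood in the weak (non-strict) sense, so that the borderline value $p=1$, for which $\{W_{n+1}/W_n\}$ is constant, is correctly included.
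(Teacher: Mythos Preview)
Your proof is correct and follows essentially the same approach as the paper: both compute $W_{n+1}/W_n=\frac{n+p}{n+1}$ and then the consecutive difference $\frac{1-p}{(n+1)(n+2)}$, concluding the equivalence with $p\leq1$. Your additional remarks (the rewriting $a_n=1+\frac{p-1}{n+1}$ and the explicit check at $n=0,1$ for the ``only if'' direction) are harmless elaborations on the same idea.
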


\begin{proof}
	Compute the difference of consecutive terms:
	\begin{equation*}
		\begin{split}
			\frac{W_{n+2}}{W_{n+1}}-\frac{W_{n+1}}{W_n}&=\frac{(p)_{n+2}/(n+2)!}{(p)_{n+1}/(n+1)!}-\frac{(p)_{n+1}/(n+1)!}{(p)_n/n!}\\
			&=\frac{p+n+1}{n+2}-\frac{p+n}{n+1}=\frac{1-p}{(n+1)(n+2)}.
		\end{split}
	\end{equation*}
	For all $n \geq 0$, $(n+1)(n+2) > 0$. Thus:
	\begin{itemize}
		\item If $p \leq 1$, the difference is non-negative, so $\{ W_{n+1}/W_n\}$ is increasing.
		\item If $p > 1$, the difference is negative, so the sequence is decreasing.
	\end{itemize}
	The proof is completed.
\end{proof}

The following lemma can be easily derived from the properties of quadratic functions.
\begin{lemma}\label{lemma-tau}
	Let $a,b,c\in\mathbb{R}_+$ with $(c-a)(c-b)\geq0$, and define
	\begin{equation*}
		\tau(p):=\tau(p;a,b,c)=p^2-\left(1+\frac{2ab}{c}\right)p+\frac{ab(a+1)(b+1)}{c^2(c+1)}.
	\end{equation*}
	Then $\tau(p)\leq0$ if and only if $p_*\leq p\leq p^*$, where  $p_*$ (the smaller root) and $p^*$ (the larger root) of $\tau(p)$ are explicitly computed as:
	\begin{equation*}
		p_*=\frac{1}{2}+\frac{ab}{c}-\sqrt{\frac{1}{4}+\frac{ab(c-a)(c-b)}{c^2(c+1)}},\quad p^*=\frac{1}{2}+\frac{ab}{c}+\sqrt{\frac{1}{4}+\frac{ab(c-a)(c-b)}{c^2(c+1)}}.
	\end{equation*}Moreover, $0<p_*<\frac{ab}{c}<1+\frac{ab}{c}<p^*$. In particular, if $a+b\leq c$, then 
	\begin{equation}\label{lemma2-1}
		\frac{1+a+b+ab}{c+2}<1+\frac{ab}{c}.
	\end{equation}
\end{lemma}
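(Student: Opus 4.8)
The plan is to treat $\tau(p)$ simply as a monic quadratic polynomial in the single variable $p$, so that $\tau(p)\le 0$ holds precisely on the closed interval between its two real roots — once one has checked that those roots are real. First I would compute the discriminant $\Delta=\bigl(1+\tfrac{2ab}{c}\bigr)^2-\tfrac{4ab(a+1)(b+1)}{c(c+1)}$ of $\tau$ (as a polynomial in $p$) and, by clearing the common denominator $c^2(c+1)$ and cancelling, establish the key identity
\[
\Delta \;=\; 1+\frac{4ab(c-a)(c-b)}{c^2(c+1)} \;\ge\; 1,
\]
where the final inequality uses $a,b,c>0$ and the hypothesis $(c-a)(c-b)\ge 0$. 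Since $\Delta\ge 1>0$, $\tau$ has two distinct real roots, and the quadratic formula together with $\tfrac12\sqrt{\Delta}=\sqrt{\Delta/4}=\sqrt{\tfrac14+\tfrac{ab(c-a)(c-b)}{c^2(c+1)}}$ identifies them as exactly the stated $p_*$ and $p^*$. This gives the ``if and only if'' assertion.

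For the chain $0<p_*<\tfrac{ab}{c}<1+\tfrac{ab}{c}<p^*$ the middle inequality is trivial, and positivity of both roots is immediate from Vieta's formulas: their sum equals $1+\tfrac{2ab}{c}>0$ and their product equals the (positive) constant term of $\tau$. For the two outer comparisons I would exploit that the vertex of the upward parabola $\tau$ lies at $p=\tfrac12\bigl(1+\tfrac{2ab}{c}\bigr)=\tfrac12+\tfrac{ab}{c}$, which is precisely the midpoint of $\tfrac{ab}{c}$ and $1+\tfrac{ab}{c}$; hence $\tau\bigl(\tfrac{ab}{c}\bigr)=\tau\bigl(1+\tfrac{ab}{c}\bigr)$, and a short direct computation evaluates this common value to $-\tfrac{ab(c-a)(c-b)}{c^2(c+1)}\le 0$. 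Because $\tau$ opens upward, every point at which $\tau\le 0$ lies in $[p_*,p^*]$; thus $p_*\le\tfrac{ab}{c}$ and $1+\tfrac{ab}{c}\le p^*$, both strict as soon as $(c-a)(c-b)>0$ — in particular whenever $a+b\le c$, since then $c>a$ and $c>b$.

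Finally, the ``in particular'' clause is a one-line estimate: cross-multiplying $\tfrac{1+a+b+ab}{c+2}<1+\tfrac{ab}{c}$ by the positive quantities $c+2$ and $c$ reduces it to $(a+b)c<c^2+c+2ab$, which follows from $a+b\le c$ because then $(a+b)c\le c^2<c^2+c+2ab$. I expect the only genuine effort to be the two pieces of elementary but bookkeeping-heavy algebra — verifying the discriminant identity and the value of $\tau$ at $\tfrac{ab}{c}$ — together with noting that at the degenerate boundary $(c-a)(c-b)=0$ the strict inequalities in the ``moreover'' part collapse to equalities; everything else is standard quadratic-function theory.
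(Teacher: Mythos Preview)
Your proposal is correct and follows essentially the same route as the paper's own proof: both treat $\tau$ as a monic quadratic in $p$, verify the discriminant identity $\Delta=1+\tfrac{4ab(c-a)(c-b)}{c^2(c+1)}\ge 1$, read off the roots, and finish the inequality chain and the ``in particular'' clause by direct algebra. The only cosmetic differences are that the paper bounds $p_*$ and $p^*$ straight from the explicit root formulas (noting the square-root term exceeds $\tfrac12$) rather than evaluating $\tau(ab/c)$, and that you additionally supply the Vieta argument for $p_*>0$ and flag the degenerate case $(c-a)(c-b)=0$, both of which the paper glosses over.
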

\begin{proof}
	For $(c-a)(c-b)\geq0$, the quadratic $\tau(p)$ has discriminant:
	\begin{equation*}
	\Delta=\left(1+\frac{2ab}{c}\right)^2-4\frac{ab(a+1)(b+1)}{c^2(c+1)}=1+\frac{4ab(c-a)(c-b)}{c^2(c+1)}>0,
	\end{equation*}
	so it has two distinct real roots $p_*$ (smaller) and $p^*$ (larger). 
	Moreover,
	\begin{equation*}
	0<p_*<\frac{1}{2}+\frac{ab}{c}-\frac{1}{2}=\frac{ab}{c}<1+\frac{ab}{c}=\frac{1}{2}+\frac{ab}{c}+\frac{1}{2}<p^*.
	\end{equation*}
	For $a+b\leq c$, verify the inequality:
	\begin{equation*}
		\frac{1+a+b+ab}{c+2}-\left(1+\frac{ab}{c}\right)=-\frac{2ab+c+c(c-a-b)}{c(c+2)}<0,
	\end{equation*}
since $c(c-a-b)\geq0$ and $2ab>0$.
\end{proof}

\begin{lemma}\label{lemma-F}
	Let $a,b,c\in\mathbb{R}_+$ with $a+b\leq c$ and define
	\begin{align*}
		F(x)&=p(1-p)F(a,b;c;x)+\frac{2abp}{c}(1-x)F(a+1,b+1;c+1;x)\\
		&\quad -\frac{ab(a+1)(b+1)}{c(c+1)}(1-x)^2F(a+2,b+2;c+2;x),
	\end{align*}
    where $p_*$ and $p^*$ are given in Lemma \ref{lemma-tau}.
    Then the following statements hold:
	\begin{enumerate}[leftmargin=1.6em,label=(\roman*)]
		\item $\dfrac{F(x)}{1-x}$ is AM on $(0,1)$ if and only if $p_*\leq p\leq 1$;
		\item If $p\geq p^*$, then $-F(x)$ is AM on $(0,1)$.
	\end{enumerate}
\end{lemma}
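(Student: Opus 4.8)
The plan is to pass to power series and reduce everything to how the two roots of one explicit quadratic move when its parameters are shifted. Writing $\phi(x)=F(a,b;c;x)$ and using $\phi'=\frac{ab}{c}F(a+1,b+1;c+1;x)$ and $\phi''=\frac{ab(a+1)(b+1)}{c(c+1)}F(a+2,b+2;c+2;x)$, we have $F(x)=p(1-p)\phi+2p(1-x)\phi'-(1-x)^2\phi''$, equivalently $F(x)=-(1-x)^{2-p}\bigl[(1-x)^pF(a,b;c;x)\bigr]''$, the latter being the convenient form near $x=1^-$. Expanding in powers of $x$ and collecting the coefficients of $A_n,A_{n+1},A_{n+2}$ — where $A_n=\frac{(a)_n(b)_n}{(c)_n\,n!}$ and $(n+1)A_{n+1}=\frac{(n+a)(n+b)}{n+c}A_n$ — everything collapses to
\[
F(x)=\sum_{n\ge0}c_nx^n,\qquad c_n=-A_n\,\tau\bigl(n+p;\ n+a,\ n+b,\ n+c\bigr),
\]
so $c_0=-\tau(p;a,b,c)$. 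Set $\tau_n(p):=\tau(n+p;n+a,n+b,n+c)$; in its first slot this is a monic upward quadratic with roots $t_\pm^{(n)}=\frac12\bigl(1+\frac{2(n+a)(n+b)}{n+c}\bigr)\pm\frac12\sqrt{\Delta_n}$, where $\Delta_n=1+\frac{4(n+a)(n+b)(c-a)(c-b)}{(n+c)^2(n+c+1)}$, and for $n=0$ these are exactly $p_*$ and $p^*$ from Lemma~\ref{lemma-tau}.

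The heart of the matter — and the step I expect to be the main obstacle — is a monotonicity lemma: both $n\mapsto t_+^{(n)}-n$ and $n\mapsto t_-^{(n)}-n$ are non-increasing on $[0,\infty)$. Since $t_\pm^{(n)}-n=\frac12+\frac{(a+b-c)n+ab}{n+c}\pm\frac12\sqrt{\Delta_n}$ and $\Delta_n=1+4(c-a)(c-b)\,h(n)$ with $h(n)=\frac{(n+a)(n+b)}{(n+c)^2(n+c+1)}$, differentiating and using $(c-a)(c-b)>0$ (valid because $a+b\le c$) together with $\sqrt{\Delta_n}\ge1$, both claims follow from the single estimate $|h'(n)|\le(n+c)^{-2}$ for $n\ge0$, i.e.\ from
\[
\bigl|(2n+a+b)(n+c)(n+c+1)-(n+a)(n+b)(3n+3c+2)\bigr|\le(n+c)(n+c+1)^2,
\]
a bounded-degree polynomial inequality under $a,b>0$ and $a+b\le c$; establishing it carefully (locating the relevant local extrema) is where the real work lies. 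Two consequences will be used below: $t_+^{(n)}-n\le p^*$ and $t_-^{(n)}-n\le p_*$ for all $n\ge0$.

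Granting this, part (ii) is immediate: for $p\ge p^*$ and any $n$ we have $n+p\ge n+p^*\ge t_+^{(n)}$, so $\tau_n(p)\ge0$, whence $-c_n=A_n\tau_n(p)\ge0$ and $-F$ has non-negative power-series coefficients, i.e.\ $-F$ is AM on $(0,1)$. For the necessity direction of (i): if $F(x)/(1-x)$ is AM then its constant coefficient $c_0=-\tau(p)$ is $\ge0$, forcing $\tau(p)\le0$ and hence $p\in[p_*,p^*]$ by Lemma~\ref{lemma-tau}; and if moreover $p>1$, then the identity $F(x)=-(1-x)^{2-p}[(1-x)^pF(a,b;c;x)]''$ combined with the endpoint behaviour of $F(a,b;c;x)$ — \eqref{c>a+b} when $a+b<c$, \eqref{c=a+b} when $a+b=c$ — gives $F(x)\to p(1-p)\,F(a,b;c;1^-)<0$ (resp.\ $F(x)\to-\infty$) as $x\to1^-$, so $F(x)/(1-x)<0$ near $x=1$, contradicting absolute monotonicity. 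Hence $p\le1$, and combining, $p_*\le p\le1$.

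For the sufficiency direction of (i), assume $p_*\le p\le1$. By the monotonicity lemma $t_-^{(n)}-n\le p_*\le p$, so $n+p\ge t_-^{(n)}$, and therefore $\tau_n(p)\le0\iff p\le t_+^{(n)}-n$; since $n\mapsto t_+^{(n)}-n$ is non-increasing, $\{n\ge0:\tau_n(p)\le0\}$ is an initial segment $\{0,1,\dots,K\}$ (with $K=\infty$ allowed), and it contains $0$ because $\tau_0(p)=\tau(p)\le0$. Thus $c_n\ge0$ for $n\le K$ and $c_n\le0$ for $n>K$, so the coefficients $s_n=\sum_{k=0}^nc_k$ of $F(x)/(1-x)$ are non-decreasing on $\{0,\dots,K\}$ and non-increasing afterwards; in particular $s_n\ge s_0=-\tau(p)\ge0$ for $n\le K$. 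A finite $K$ forces $1+a+b-c=\lim_{m\to\infty}(t_+^{(m)}-m)<p\le1$, hence $a+b<c$, so $\sum_k|c_k|<\infty$ and for $n>K$ the $s_n$ decrease to $\sum_kc_k=\lim_{x\to1^-}F(x)=p(1-p)\,F(a,b;c;1^-)\ge0$. In all cases $s_n\ge0$ for every $n$, i.e.\ $F(x)/(1-x)$ is AM on $(0,1)$; with the necessity direction this proves (i).
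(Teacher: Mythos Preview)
Your outline is correct, and the identity $c_n=-A_n\,\tau(n+p;n+a,n+b,n+c)$ is right---it is exactly the paper's $c_n=A_n\kappa_n$ rewritten. The necessity arguments for (i) and the partial-sum bookkeeping for sufficiency also agree with the paper. Where you diverge is in establishing the sign pattern of $(c_n)_{n\ge0}$: you track how the roots $t_\pm^{(n)}-n$ of the shifted quadratics move with $n$ and reduce everything to the estimate $|h'(n)|\le(n+c)^{-2}$, which you correctly flag as the main obstacle and leave open.

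The paper bypasses this obstacle entirely. Rather than tracking roots, it computes the forward difference
\[
\kappa_{n+1}-\kappa_n=\frac{2(c-a)(c-b)\bigl[1+a+b+ab-(c+2)p-(p+c-a-b)n\bigr]}{(c+n)(c+n+1)(c+n+2)}.
\]
Since $(c-a)(c-b)\ge0$ and the bracket is \emph{linear} in $n$ with non-positive slope ($p+c-a-b>0$), the sequence $(\kappa_n)$ is either decreasing throughout or unimodal (increasing then decreasing). That is all one needs: for (ii), $p\ge p^*$ gives $(c+2)p>1+a+b+ab$ via \eqref{lemma2-1}, so $\kappa_n$ is decreasing and $\kappa_n\le\kappa_0=-\tau(p)\le0$; for (i), $\kappa_0\ge0$ plus unimodality forces the sign pattern $+\cdots+-\cdots-$, after which your partial-sum argument (using $\kappa_\infty=(p+c-a-b)(a+b-c+1-p)$ and $F(1^-)\ge0$) goes through verbatim. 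Your root-monotonicity lemma is strictly stronger than what the problem requires, and because your bound $|h'(n)|\le(n+c)^{-2}$ is asymptotically an equality, proving it would need careful control of subleading terms---work that the paper's one-line recurrence renders unnecessary.
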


\begin{proof}
	First, we expand $F(x)$ into a power series to analyze the sign of its coefficients. Using the power series representation of the Gaussian hypergeometric function, we rewrite $F(x)$ as 
	\begin{align}\nonumber
		F(x)&=p(1-p)\sum_{n=0}^\infty\frac{(a)_n(b)_n}{(c)_nn!}x^n+\frac{2abp}{c}(1-x)\sum_{n=0}^\infty\frac{(a+1)_n(b+1)_n}{(c+1)_nn!}x^n+\\ \nonumber
		&\qquad-\frac{ab(a+1)(b+1)}{c(c+1)}(1-x)^2\sum_{n=0}^\infty\frac{(a+2)_n(b+2)_n}{(c+2)_nn!}x^n\\ \label{F(x)}
		&=\sum_{n=0}^\infty \frac{(a)_n(b)_n\kappa_n}{(c)_nn!}x^n,
	\end{align}
	where the coefficient $\kappa_n$ (dependent on $p,a,b,c$) is
	\begin{align*}
		\kappa_n&=-p^2+\left[1+\frac{2ab+2(a+b-c)n}{c+n}\right]p\\
		&\quad-\frac{(a+b-c)n\left[\begin{array}{l}
				1+a+b+2ab\\+c+n+(a+b-c)n
			\end{array}\right]+ab(a+1)(b+1)}{(n+c)(n+c+1)}.
	\end{align*}
	A key recurrence relation for $\kappa_n$ reveals its monotonicity:
	\begin{equation*}
		\kappa_{n+1}-\kappa_n=\frac{2(c-a)(c-b)[1+a+b+ab-(c+2)p-(p+c-a-b)n]}{(c+ n) (1+c+n) (2+c+n)}.
	\end{equation*}
	Since $a+b\leq c$ and $(c-a)(c-b)\geq0$ (from Lemma \ref{lemma-tau}'s condition),  $\kappa_n$ is either decreasing (if $1+a+b+ab-(c+2)p\leq0$), or unimodal (first increasing, then decreasing; if $1+a+b+ab-(c+2)p>0$) for all $n\geq0$. 
	
\noindent(i) Absolute monotonicity of $\frac{F(x)}{1-x}$.
	\begin{description}[leftmargin=0.5em]
		\item[Necessity]  If $\frac{F(x)}{1-x}$ is absolutely monotonic, its power series coefficients are non-negative for all $n\geq0$. At $x=0$, $F(0)=\kappa_0=-\tau(p)\geq0$, so Lemma \ref{lemma-tau} implies $p_*\leq p\leq p^*$. Additionally, for $p>1$,
		we analyze the limit of $F(x)$ as $x\to1^-$:
		\begin{itemize}[leftmargin=1.6em]
			\item[$\circ$] If $a+b=c$, $F(1^-)=-\infty$ (singular behavior of zero-balanced hypergeometric functions, via \eqref{c=a+b});
			\item[$\circ$] If $a+b<c$, $F(1^-)=-p(p-1)\frac{\Gamma(c)\Gamma(c-a-b)}{\Gamma(c-a)\Gamma(c-b)}<0$ (finite but negative, via \eqref{c>a+b}).
		\end{itemize}
		In both cases, $\lim_{x\to1^-}\frac{F(x)}{1-x}=-\infty$, which contradicts the non-negativity of all derivatives of an absolutely monotonic function. Thus $p\leq1,$ and thereby $p_*\leq p\leq 1$.
		\item[Sufficiency] For $p_*\leq p\leq 1$, we split into two cases based on the value of $a+b-c+1$ (a threshold derived from the limit $\kappa_\infty=(p+c-a-b)(a+b-c+1-p)$):
		\begin{itemize}[leftmargin=1.5em]
		\item[$1.$] If $p_*\leq p\leq a+b-c+1$, then $\kappa_n\geq\min\{\kappa_0,\kappa_\infty\}\geq0$.
			Here, $\kappa_0=-\tau(p)\geq0$ (by $p_*\leq p\leq p^*$) and $\kappa_\infty\geq0$ (by $p\leq a+b-c+1$), so all $\kappa_n\geq0$. Since $\frac{F(x)}{1-x}$ is the product of two absolutely monotonic functions (the power series \eqref{F(x)} and $\frac{1}{1-x} = \sum_{n=0}^{\infty}x^n$), it is absolutely monotonic.
		\item[$2.$] If $a+b-c+1<p\leq 1$, then $a+b<c$ (so $\kappa_\infty<0$), and $\kappa_n$ is unimodal (first increasing, then decreasing).
		Let $\frac{F(x)}{1-x}=\sum_{n=0}^{\infty}\kappa_n^*x^n$, then $F(x)=(1-x)\sum_{n=0}^\infty \kappa^*_nx^n=\sum_{n=0}^\infty(\kappa^*_n-\kappa^*_{n-1})x^n$ (with $\kappa^*_{-1}=0$).  By \eqref{F(x)},$\kappa^*_n-\kappa^*_{n-1}=\frac{(a)_n(b)_n\kappa_n}{(c)_nn!}$, so $\{\kappa^*_n\}$ is also unimodal (first increasing, then decreasing). This gives  $\kappa^*_n\geq\min\{\kappa^*_0,\kappa^*_\infty\}=\min\{-\tau(p),F(1^-))\}$, both of which are non-negative (by $p_*\leq p\leq p^*$ and $a+b<c$). Thus all $\kappa^*_n\geq0$, namely, $\frac{F(x)}{1-x}$ is absolutely monotonic.
			\end{itemize}
	\end{description}
	
	\medskip
\noindent(ii) Absolute monotonicity of $-F(x)$.
	
	 For $p\geq p^*$, Lemma \ref{lemma-tau} implies $\tau(p)\geq0$. By \eqref{lemma2-1}, we have  $(c+2)p>1+a+b+ab$, and thereby $\kappa_n$ is decreasing for $n\geq0$. Hence, $\kappa_n\leq \kappa_0=-\tau(p)\leq0$. Combining this with \eqref{F(x)}, we derive that $-F(x)$ is absolutely monotonic on $(0,1)$.
\end{proof}

\begin{lemma}\label{lemma-H}
	Let $a,b,c\in\mathbb{R}_+$ with $c\geq a+b-1$ and $(1+a+b-ab)c\geq2ab$, and define
	\begin{equation*}
		H(x)=\frac{(1-x)F(a+1,b+1;c+1;x)}{F(a,b;c;x)}.
	\end{equation*}
	Then we have:
	\begin{itemize}[leftmargin=2em]
		\item If $(c-a)(c-b)\leq0$, then $H'(x)$ is AM on $(0,1)$;
		\item If $(c-a)(c-b)\geq0$,  then $-H'(x)$ is AM on $(0,1)$.
	\end{itemize}
\end{lemma}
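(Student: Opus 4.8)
The plan is to apply Jurkat's criterion (Proposition~\ref{prop-Jurkat}) directly, writing $H$ as a ratio of two power series on $(0,1)$. Put $D(x)=F(a,b;c;x)=\sum_{n\geq0}v_nx^n$ with $v_n=\frac{(a)_n(b)_n}{(c)_nn!}>0$, and $N(x)=(1-x)F(a+1,b+1;c+1;x)=\sum_{n\geq0}u_nx^n$, so $H=N/D$. If $F(a+1,b+1;c+1;x)=\sum_{n\geq0}w_nx^n$ with $w_n=\frac{(a+1)_n(b+1)_n}{(c+1)_nn!}$, then $u_n=w_n-w_{n-1}$ (with the convention $w_{-1}=0$), and the $u_n$ may change sign, which is harmless for Jurkat. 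Following the four-step scheme, it suffices to show (ii) $\{v_{n+1}/v_n\}_{n\geq0}$ is increasing and (iii) $\{u_n/v_n\}_{n\geq0}$ is monotone; then (iv) Proposition~\ref{prop-Jurkat} translates the direction of monotonicity of $\{u_n/v_n\}$ into the absolute monotonicity of $H'$ or of $-H'$.

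Step (iii) is, pleasantly, just a computation. From $\frac{(a+1)_n}{(a)_n}=\frac{n+a}{a}$, $\frac{(c)_n}{(c+1)_n}=\frac{c}{n+c}$ (and likewise for $b$) one obtains $\frac{w_n}{v_n}=\frac{c(n+a)(n+b)}{ab(n+c)}$ and $\frac{w_{n-1}}{w_n}=\frac{n(n+c)}{(n+a)(n+b)}$ for $n\geq1$, whence
\[
\frac{u_n}{v_n}=\frac{w_n}{v_n}\Bigl(1-\frac{w_{n-1}}{w_n}\Bigr)=\frac{c\,[\,ab+(a+b-c)n\,]}{ab(n+c)},
\]
a formula that holds for all $n\geq0$. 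Regarding the right-hand side as a function of the real variable $n\geq0$, its derivative equals $\frac{c}{ab}\cdot\frac{-(c-a)(c-b)}{(n+c)^2}$. Hence $\{u_n/v_n\}$ is increasing when $(c-a)(c-b)\leq0$ and decreasing when $(c-a)(c-b)\geq0$, which is exactly the dichotomy in the statement; no condition relating $a+b$ and $c$ enters here.

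The \textbf{main obstacle is Step (ii)}, and this is where the two hypotheses $c\geq a+b-1$ and $(1+a+b-ab)c\geq2ab$ are used. With $\rho(n)=v_{n+1}/v_n=\frac{(n+a)(n+b)}{(n+c)(n+1)}$, clearing the (positive) common denominator shows that $\rho(n+1)-\rho(n)$ has the same sign as
\[
A(n)=(c-a-b+1)\,n^2+(3c-a-b-2ab+1)\,n+\bigl(c(1+a+b-ab)-2ab\bigr)=:\alpha n^2+\beta n+\gamma .
\]
Now $\alpha\geq0$ is precisely the first hypothesis and $\gamma\geq0$ is precisely the second (which moreover forces $1+a+b-ab>0$, i.e.\ $(a-1)(b-1)<2$). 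It remains to see $\beta\geq0$: since $\beta=3\alpha-2(a-1)(b-1)$, this is immediate when $(a-1)(b-1)\leq0$; when $(a-1)(b-1)>0$, rewrite $\gamma\geq0$ as $c\geq\frac{2ab}{2-(a-1)(b-1)}$, and a short cross-multiplication reduces the desired $3\alpha\geq2(a-1)(b-1)$ to the trivial inequality $a+b+2ab+2>1$. Thus all three coefficients of $A$ are nonnegative, so $A(n)\geq0$ for $n\geq0$, i.e.\ $\{v_{n+1}/v_n\}$ is increasing.

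Finally, Step (iv): $v_n>0$ and $\{v_{n+1}/v_n\}$ is increasing (Step (ii)), so Proposition~\ref{prop-Jurkat} applies to $H=N/D$. When $(c-a)(c-b)\leq0$, Step (iii) gives $\{u_n/v_n\}$ increasing, hence $H'$ is absolutely monotonic on $(0,1)$; when $(c-a)(c-b)\geq0$, $\{u_n/v_n\}$ is decreasing, hence $-H'$ is absolutely monotonic on $(0,1)$. This is the assertion of the lemma. (At the outset one records that $D$ and $N$ converge on $(0,1)$ — being a hypergeometric series and $(1-x)$ times one — so the criterion is valid on the whole interval.)
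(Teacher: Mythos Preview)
Your proof is correct and follows essentially the same route as the paper's: write $H=N/D$ with $D=F(a,b;c;x)$, compute the closed form $u_n/v_n=\frac{c[ab+(a+b-c)n]}{ab(n+c)}$, verify that $\{v_{n+1}/v_n\}$ is increasing via the quadratic numerator $A(n)$, and apply Jurkat's criterion. The paper obtains the same quadratic (with the same three coefficients $\alpha,\beta,\gamma$) and the same difference formula for $u_n/v_n$, only using the discrete difference $\frac{\mu_{n+1}}{\lambda_{n+1}}-\frac{\mu_n}{\lambda_n}=-\frac{c(c-a)(c-b)}{ab(c+n)(c+n+1)}$ rather than your real-variable derivative. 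One point worth noting: the paper simply asserts that $A(n)\geq0$ ``by $c\geq a+b-1$ and $(1+a+b-ab)c\geq 2ab$'' without checking the linear coefficient, whereas you actually supply the missing verification that $\beta=3\alpha-2(a-1)(b-1)\geq0$; your cross-multiplication argument (reducing to $2ab+a+b+1\geq0$) is correct and fills a gap the paper glosses over.
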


\begin{proof}
 Write $H(x)$ in terms of power series as
	\begin{equation*}
		H(x)=\frac{\sum_{n=0}^\infty \mu_nx^n}{\sum_{n=0}^\infty \lambda_nx^n},
	\end{equation*}
	where 
	\begin{equation}\label{mu-lambda}
		\mu_n=\frac{c[ab+(a+b-c)n]}{ab(n+c)}\lambda_n	 \quad\text{and}\quad	\lambda_n=\frac{(a)_n(b)_n}{(c)_nn!}.
	\end{equation}
	\begin{itemize}[leftmargin=1.2em]
		\item Monotonicity of $\{\lambda_{n+1}/\lambda_n\}$.\\
		For $n\geq0$,  we have $\lambda_{n+1}/\lambda_n=[(a+n)(b+n)]/[(c+n)(1+n)]$ and thereby
		\begin{align*}
		&\frac{\lambda_{n+2}}{\lambda_{n+1}}-\frac{\lambda_{n+1}}{\lambda_n}=\frac{\left[\begin{array}{l}
				(1+a+b-ab)c-2ab+ (1-a-b-2ab+3c)n\\+(1-a-b+c)n^2
		\end{array}\right]}{(1+n)(2+n)(c+n)(1+c+n)}\geq0,
		\end{align*}by $c \geq a+b-1$ and $(1+a+b-ab)c \geq 2ab$.
       \item Monotonicity of $\{\mu_{n}/\lambda_n\}$.\\
		By \eqref{mu-lambda}, we simplify the ratio $\mu_n/\lambda_n$ to
		\begin{equation*}
			\frac{\mu_n}{\lambda_n}=\frac{c[ab+(a+b-c)n]}{ab(n+c)}.
		\end{equation*}
		The difference of consecutive ratios is
		\begin{equation*}
			\frac{\mu_{n+1}}{\lambda_{n+1}}-\frac{\mu_{n}}{\lambda_n}=-\frac{c(c-a)(c-b)}{a b (c + n) (1 + c + n)}. 
		\end{equation*}
		If $(c-a)(c-b)\leq0$, the difference is non-negative, so  $\mu_n/\lambda_n$ is increasing. By Proposition \ref{prop-Jurkat},  so $H'(x)$ is absolutely monotonic on $(0,1)$. If $(c-a)(c-b)\geq0$, the difference is non-negative, so  $\mu_n/\lambda_n$ is decreasing. By Proposition \ref{prop-Jurkat},  so $-H'(x)$ is absolutely monotonic on $(0,1)$
	\end{itemize}
	This completes the proof.
\end{proof}

\section{Several absolutely monotonic functions}

This section systematically derives the conditions for determining the absolute monotonicity of two core function families $\mathcal{F}_p(x)$ and $\mathcal{G}_p(x)$ as well as the logarithmic transform $\ln\mathcal{F}_p(x)$, based on the foundational tools (Jurkat’s criterion, test function for sequence monotonicity) and lemmas \ref{lemma-W_{n+1}/W_n}--\ref{lemma-H} in Section 2. 

\subsection{Absolute Monotonicity of $\mathcal{F}_p$}	

The absolute monotonicity of $\mathcal{F}_p(x)$ and its first two derivatives is analyzed in the cases of $a+b\leq c$ and $a+b>c$, by using power series ratio properties and Lemma \ref{lemma-F}.

\begin{theorem}\label{theorem-Fp}{\bf (Case $c\geq a+b$)}\\
	Let $c\geq a+b$ and $\mathcal{F}_p(x)=(1-x)^pF(a,b;c;x)$ be defined on $(0,1)$. 
	Let $p_*$ and $p^*$ denote the roots of $\tau(p)$ in Lemma \ref{lemma-tau} (i.e. $p_*=\frac{1}{2}+\frac{ab}{c}-\sqrt{\frac{1}{4}+\frac{ab(c-a)(c-b)}{c^2(c+1)}}$and $p^*=\frac{1}{2}+\frac{ab}{c}+\sqrt{\frac{1}{4}+\frac{ab(c-a)(c-b)}{c^2(c+1)}}$). Then:
	\begin{enumerate}[leftmargin=2.5em,itemindent=0em, label=(\roman*)]
		\item $-\mathcal{F}'_p$ is AM on $(0,1)$ if and only if $\frac{ab}{c}\leq p\leq1$;
		\item $-\mathcal{F}''_p$ is AM on $(0,1)$ if and only if $p_*\leq p\leq1$;
		\item If $p^*\leq p\leq 2$, then $\mathcal{F}''_p$ is AM on $(0,1)$.
	\end{enumerate}
\end{theorem}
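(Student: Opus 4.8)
The plan is to reduce everything to Jurkat's criterion (Proposition \ref{prop-Jurkat}), using $q(x)=(1-x)^{-p}=\sum W_n x^n$ as the denominator series, so that $\mathcal{F}_p(x)=p(x)/q(x)$ with $p(x)=F(a,b;c;x)$. For part (i), I would write $-\mathcal{F}_p'(x)$ in a form amenable to the test-function machinery of Proposition \ref{prop-method}. Since $\mathcal{F}_p'(x)=(1-x)^{p-1}\bigl[(1-x)F'(a,b;c;x)-pF(a,b;c;x)\bigr]$ and, using the contiguous relation $F'(a,b;c;x)=\frac{ab}{c}F(a+1,b+1;c+1;x)$, the bracket is $\mathcal{T}_p$ applied to $F(a,b;c;x)$ up to the shift. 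The key computation is that $\{W_{n+1}/W_n\}$ is increasing precisely when $p\le 1$ (Lemma \ref{lemma-W_{n+1}/W_n}), which gives one of the boundary constraints. For the numerator ratio, I would compute the coefficients of $(1-x)F'(a,b;c;x)-pF(a,b;c;x)$ and divide by $W_n$; the resulting sequence's monotonicity should hinge on the sign of an expression that vanishes at $p=ab/c$ on one side. The necessity of $p\le 1$ follows from the boundary behavior as $x\to 1^-$: when $c=a+b$ we use \eqref{c=a+b} and when $c>a+b$ we use \eqref{c>a+b}, both forcing a sign contradiction if $p>1$ (this is the same mechanism used in the necessity argument of Lemma \ref{lemma-F}(i)). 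The necessity of $p\ge ab/c$ should come from examining the first nonzero coefficient or low-order behavior near $x=0$.

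For parts (ii) and (iii), the natural move is to relate $\mathcal{F}_p''$ to the function $F(x)$ of Lemma \ref{lemma-F}. Differentiating twice and factoring out $(1-x)^{p-2}$, one gets $\mathcal{F}_p''(x)=(1-x)^{p-2}\bigl[p(1-p)F(a,b;c;x)+\tfrac{2abp}{c}(1-x)F(a+1,b+1;c+1;x)-\tfrac{ab(a+1)(b+1)}{c(c+1)}(1-x)^2F(a+2,b+2;c+2;x)\bigr]$, where the bracket is exactly $F(x)$ from Lemma \ref{lemma-F} (using $F'=\frac{ab}{c}F(a+1,b+1;c+1;\cdot)$ twice). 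So $\mathcal{F}_p''(x)=(1-x)^{p-2}F(x)$, i.e. $-\mathcal{F}_p'' = (1-x)^{p-2}\cdot\bigl[-F(x)\bigr]$ and also $-\mathcal{F}_p''(x) = (1-x)^{p-1}\cdot\frac{-F(x)}{1-x}$. Then part (ii) reads: $-\mathcal{F}_p''$ AM on $(0,1)$. For sufficiency with $p_*\le p\le 1$, I would write $-\mathcal{F}_p''(x)=(1-x)^{p-1}\cdot\bigl(-\tfrac{F(x)}{1-x}\bigr)$; but Lemma \ref{lemma-F}(i) says $\frac{F(x)}{1-x}$ (not its negative) is AM for $p_*\le p\le 1$, so I need to be careful about the sign — more likely the correct factoring uses $p(1-p)\ge 0$ when $p\le 1$ and the relation is that $\mathcal{F}_p''$ itself is controlled. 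Let me instead argue via the coefficient sign of $F(x)$: the $\kappa_n$ analysis in Lemma \ref{lemma-F} shows that for $p_*\le p\le 1$ all $\kappa_n\ge 0$, hence $F(x)$ is AM, hence $(1-x)^{p-2}F(x)=\mathcal{F}_p''(x)$ — wait, that would give $\mathcal{F}_p''$ AM, contradicting (ii). The resolution: $(1-x)^{p-2}$ is AM only when $p\le 2$; and the sign in front of the leading term $p(1-p)$ changes at $p=1$. For $p_*\le p<1$ we have $p(1-p)>0$; one must check which of $\pm\mathcal{F}_p''$ ends up AM, and I expect the necessity direction (the limit $x\to1^-$ and the $x=0$ value $\mathcal{F}_p''(0)=p(1-p)+\tfrac{2abp}{c}-\tfrac{ab(a+1)(b+1)}{c(c+1)}$, whose sign ties to $\tau(p)$) pins down $p_*\le p\le 1$. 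For part (iii), $p^*\le p\le 2$: Lemma \ref{lemma-F}(ii) gives $-F(x)$ AM when $p\ge p^*$, and $(1-x)^{p-2}$ is AM when $p\le 2$, so $\mathcal{F}_p''(x)=(1-x)^{p-2}F(x)=-(1-x)^{p-2}\cdot(-F(x))$ — again a sign issue. I think the correct reading is that $\mathcal{F}_p''(x)$ equals $(1-x)^{p-2}$ times $F(x)$ with $F$ as defined, and since Lemma \ref{lemma-F}(ii) furnishes $-F$ AM, we actually get $-\mathcal{F}_p''$ AM for $p\ge p^*$, which would belong in (ii)'s regime — so the sign convention in the statement of Lemma \ref{lemma-F} versus the second derivative must flip once more through an extra minus from differentiating $(1-x)^p$. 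The safe approach is: carefully track the single overall sign by evaluating at one point (e.g. behavior near $x=1^-$), then invoke the appropriate half of Lemma \ref{lemma-F} together with ``product of AM functions is AM'' and ``$(1-x)^{p-2}$ is AM iff $p\le 2$.''

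The main obstacle is exactly this sign bookkeeping together with establishing the \emph{necessity} of the boundary values $p_*$, $ab/c$, $1$, and $2$. Sufficiency is essentially a corollary of Lemmas \ref{lemma-F}, \ref{lemma-W_{n+1}/W_n} and the product-closure of AM functions once the algebraic identity $\mathcal{F}_p''=(1-x)^{p-2}F(x)$ is verified. For necessity, the three tools are: (a) $\mathcal{F}_p^{(k)}(0)\ge 0$, which via the explicit formula for $\mathcal{F}_p''(0)$ equals $-\tau(p)$ up to a positive factor, forcing $p_*\le p\le p^*$; (b) the $x\to 1^-$ asymptotics \eqref{c>a+b}–\eqref{c=a+b}, which force $p\le 1$ (for $-\mathcal{F}_p'$ and $-\mathcal{F}_p''$) by producing $-\infty$ limits otherwise; (c) for (i), an inspection of the coefficient of $x^0$ or $x^1$ in $-\mathcal{F}_p'$ forcing $p\ge ab/c$, and ruling out $p<p_*$ versus the finer threshold. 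I would organize the proof as: first prove the identities expressing $\mathcal{F}_p'$ and $\mathcal{F}_p''$ through $\mathcal{T}_p F$ and $F(x)$; then dispatch sufficiency via the lemmas; then dispatch necessity via the $x=0$ and $x=1^-$ evaluations; finally assemble the three equivalences. I expect part (i)'s necessity of the \emph{lower} bound $p\ge ab/c$ (rather than $p\ge p_*$) to require the most care, since $ab/c\in(p_*,p^*)$ strictly, so it is a strictly stronger constraint than mere nonnegativity at $x=0$ and must come from a genuinely first-order (in $x$) consideration.
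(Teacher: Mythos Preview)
Your overall plan---Jurkat's criterion with denominator $(1-x)^{-p}$, the test-function machinery of Proposition~\ref{prop-method}, and reduction of $\mathcal{F}_p''$ to the function $F(x)$ of Lemma~\ref{lemma-F}---is exactly the paper's approach. The confusion you repeatedly flag comes from a single sign slip: the correct identity is
\[
\mathcal{F}_p''(x)=-(1-x)^{p-2}F(x),
\]
not $+(1-x)^{p-2}F(x)$. (Differentiate $\mathcal{F}_p'(x)=(1-x)^{p-1}\bigl[\tfrac{ab}{c}(1-x)F(a{+}1,b{+}1;c{+}1;x)-pF(a,b;c;x)\bigr]$ once more; the leading term is $p(p-1)F=-p(1-p)F$.) With this sign, part~(iii) is immediate and matches the paper verbatim: for $p^*\le p\le 2$, $(1-x)^{p-2}$ is AM and $-F(x)$ is AM by Lemma~\ref{lemma-F}(ii), so their product $\mathcal{F}_p''$ is AM.

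For part~(ii) sufficiency, once the sign is fixed your factorization reads $-\mathcal{F}_p''=(1-x)^{p-1}\cdot\frac{F(x)}{1-x}$, and both factors are AM for $p_*\le p\le 1$ (the first since $p-1\le 0$, the second by Lemma~\ref{lemma-F}(i)). This works. The paper instead writes $\mathcal{F}_p'=\tilde f/(1-x)^{-p}$ with $\tilde f(x)=\tfrac{ab}{c}F(a{+}1,b{+}1;c{+}1;x)-\tfrac{p}{1-x}F(a,b;c;x)$, computes $-\mathcal{T}_p\tilde f(x)=\frac{F(x)}{1-x}$, and invokes Proposition~\ref{prop-method} plus Jurkat; your direct-product route is a legitimate shortcut. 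One genuine error in your proposal: the claim ``for $p_*\le p\le 1$ all $\kappa_n\ge 0$'' is false. In the subcase $a+b-c+1<p\le 1$ of Lemma~\ref{lemma-F}(i) one has $\kappa_\infty<0$, and only the coefficients of $F(x)/(1-x)$ are all nonnegative, not those of $F(x)$ itself. So you must work with $F(x)/(1-x)$, not $F(x)$.

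For part~(i) your sketch is correct and matches the paper. The lower bound $p\ge ab/c$ is not a subtlety: it is precisely $-\mathcal{F}_p'(0^+)=p-\tfrac{ab}{c}\ge 0$, the zeroth coefficient of $-\mathcal{F}_p'$ (not a ``first-order'' consideration). For the upper bound $p\le 1$, the paper does \emph{not} use the $x\to 1^-$ limit of $-\mathcal{F}_p'$ directly (that limit is $0$, not $-\infty$, when $p>1$); it instead writes $\mathcal{F}_p=(1-x)\mathcal{F}_{p-1}$, deduces that the coefficients $A_n^*$ of $\mathcal{F}_{p-1}$ are decreasing, and derives a contradiction from $\sum A_n^*=\mathcal{F}_{p-1}(1^-)=0$ together with $A_0^*=1$. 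Your limit idea can be salvaged (an AM function with a positive coefficient cannot tend to $0$ at $1^-$), but the paper's coefficient argument is cleaner.
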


\begin{proof}
	(i) \textbf{\textsl{Necessity.}}  If $-\mathcal{F}'_p$ is absolutely monotonic on $(0,1)$, 
	all its derivatives on $(0, 1)$ are non-negative. In particular, the right-hand limit of its first derivative at $x=0^+$ must satisfy non-positivity:
	\begin{equation*}
		\mathcal{F}'_p(0^+)=\lim_{x\to0^+}\frac{d}{dx}\Big[(1-x)^{p}F(a,b;c;x)\Big]=\frac{ab}{c}-p\leq0,
	\end{equation*}which implies $p\geq\frac{ab}{c}$.
	To further prove $p\leq1$, rewrite $\mathcal{F}_p(x)=(1-x)\mathcal{F}_{p-1}(x)$, where $\mathcal{F}_{p-1}(x)=(1-x)^{p-1}F(a,b;c;x)=\sum_{n=0}^\infty A^*_nx^n.$
	Expanding this expression gives $\mathcal{F}_p(x)=1+\sum_{n=1}^\infty(A^*_n-A^*_{n-1})x^n$. From the absolute monotonicity of $-\mathcal{F}'_p$, it follows that $A^*_n-A^*_{n-1}\leq0$ (i.e $\{A^*_n\}_{n\geq0}$ is decreasing.)
	If $p>1$, then $p-1>0$. Combining the finiteness of $F(a,b;c;1^-)$ when $c\geq a+b$ (see \eqref{c>a+b}) or the asymptotic behavior in the zero-balanced case (see \eqref{c=a+b}), we obtain $\mathcal{F}_{p-1}(1^-)=0$ (i.e. $\sum_{n=0}^\infty A^*_n=0<\infty$). Thus, $\{A^*_n\}_{n\geq0}$ is decreasing and converges to $0$, which implies $A^*_n\geq0$; however, $\mathcal{F}_{p-1}(0)=1$, meaning $A^*_0=1,$ which is a contradiction. Therefore, $p\leq1$, and collectively, $\frac{ab}{c}\leq p\leq 1$.
	
	\textbf{\textsl{Sufficiency.}} 
	In accordance with Jurkat’s criterion (Proposition 1), \(\mathcal{F}_p(x)\) needs to be expressed as the ratio of two power series with positive coefficients to determine absolute monotonicity using sequence monotonicity. 
	
	Note that $(1-x)^{-p}=\sum_{n=0}^\infty W_n x^n$ (where $W_n=\frac{(p)_n}{n!}$ is defined in \eqref{phi-Wn}), so:
	\begin{equation*}
		\mathcal{F}_p(x)=\frac{F(a,b;c;x)}{(1-x)^{-p}}=\frac{f(x)}{(1-x)^{-p}}=\frac{\sum_{n=0}^{\infty} A_nx^n}{\sum_{n=0}^\infty W_nx^n},
	\end{equation*}	where $f(x)=\sum_{n=0}^\infty A_n x^n$ ($A_n=\frac{(a)_n(b)_n}{(c)_nn!}$). Construct the test function $-\mathcal{T}_pf(x)$ for $f(x)$ as defined in Proposition \ref{prop-method}; expanding this function yields:
	\begin{equation*}
		-\mathcal{T}_pf(x)=\sum_{n=0}^{\infty}\left[p-\frac{ab}{c}+\frac{(c-a)(c-b)n}{c(c+n)}\right]A_nx^n.
	\end{equation*}
	Since $p\geq\frac{ab}{c}$ and $c\geq a+b$ implies $(c-a)(c-b)\geq0$, all coefficients of this series are non-negative, so $-\mathcal{T}_pf(x)$ is absolutely monotonic.  By Proposition \ref{prop-method}, $\{A_n/W_n\}_{n\geq0}$ is decreasing; by Lemma \ref{lemma-W_{n+1}/W_n}, $\{W_{n+1}/W_n\}_{n\geq0}$ is increasing when $p\leq1$. Combining these results with Jurkat's criterion (Proposition \ref{prop-Jurkat}), $-\mathcal{F}'_p$ is absolutely monotonic, which completes the proof of sufficiency.
	
	\medskip
	
	(ii)	If $-\mathcal{F}''_p$ is absolutely monotonic, then $\mathcal{F}''_p(0^+)=\tau(p)\leq0$ (where $\tau(p)$ is defined in Lemma \ref{lemma-tau}). By Lemma \ref{lemma-tau}, this implies $p_*\leq p\leq p^*$. Combining this with necessity of $p\leq1$ (similarly, if $p>1$, $\mathcal{F}''_p(1^-)\to-\infty$, which contradicts absolutely monotonicity), we have $p\in[p_*,1].$
	
	For sufficiency, express $\mathcal{F}'_p(x)$ as a ratio of two power series:
	\begin{equation}\label{F'_p}
		\mathcal{F}'_p(x)=\frac{\tilde{f}(x)}{(1-x)^{-p}}=\frac{\sum_{n=0}^\infty \tilde{A}_n x^n}{\sum_{n=0}^\infty W_nx^n},
	\end{equation}
	where $\tilde{f}(x)=\frac{ab}{c}F(a+1,b+1;c+1;x)-\frac{pF(a,b;c;x)}{1-x}.$
	At this point, the test function	$-\mathcal{T}_p\tilde{f}(x)=\frac{F(x)}{1-x}$ (where $F(x)$ is defined in Lemma \ref{lemma-F}) is  absolutely monotonic by Lemma \ref{lemma-F}(i) when $p_*\leq p\leq1]$. Combining this with Proposition \ref{prop-method} and Jurkat's criterion, it follows that  $-\mathcal{F}''_p$ is absolutely monotonic. 
	
	\medskip
	
	(iii) Direct differentiation of $\mathcal{F}_p(x)$ gives
	\begin{align*}
	\mathcal{F}''_p(x)&=\left[-p(1-x)^{p-1}F(a,b;c;x)+(1-x)^p\frac{ab}{c}F(a+1,b+1;c+1;x)\right]\\
	&=-(1-x)^{p-2}F(x).
	\end{align*}When $p^*\leq p\leq 2$, Lemma \ref{lemma-F}(ii) implies $-F(x)$ is absolute monotonic; additionally, when $p\leq2$, $(1-x)^{p-2}=\sum_{n=0}^{\infty}\frac{(2-p)_n}{n!}x^n$ is absolute monotonic (all coefficients are non-negative). The product of two absolutely monotonic functions is also absolutely monotonic, so $\mathcal{F}''_p$ is absolutely monotonic.
\end{proof}
\begin{remark}
To clarify the necessity of the parameter range for the absolute monotonicity of $\mathcal{F}_p''(x)$ in Theorem \ref{theorem-Fp}(iii), we begin with the explicit expression of the second derivative of $\mathcal{F}_p(x)$, derived via direct differentiation and substitution of the function $F(x)$ defined in Lemma \ref{lemma-F}: $\mathcal{F}''_p(x)=-(1-x)^{p-2}F(x)$.
For $\mathcal{F}_p''(x)$ to be absolutely monotonic, $\mathcal{F}''_p(0)=-F(0)=\tau(p)\geq0$, which holds if and only if $p\leq p_*$ or $p\geq p^*$ (with $p_*,p^*$ being the roots of $\tau(p)$). On the other hand,  or $p>2$,  $\mathcal{F}_p(1^-)=-\infty$ (if $0<p<1$ since $0<F(1^-)<\infty$ ) and $\mathcal{F}_p(1^-)=0$ (if $p>2$ since $F(1^-)<\infty$), each of which the Maclaurin coefficients of $\mathcal{F}''_p(x)$ fail to satisfy non-negative for all $n$. Thus, $p$ can only satisfy $1\leq p\leq2$, and collectively, $1\leq p\leq p^*$ or $p^*\leq p\leq 2$. 
However, we cannot currently prove that $\mathcal{F}_p''(x)$ is absolutely monotonic when $1 \leq p \leq p_*$: for this range, the coefficients of $\mathcal{F}_p''(x)$ (derived from the power series of $F(x)$) do not satisfy the non-negativity required for absolute monotonicity (unlike the $p^* \leq p \leq 2$ case, where Lemma \ref{lemma-F} guarantees $-F(x)$ is absolutely monotonic, making all the coefficients of $\mathcal{F}_p''(x) = -(1-x)^{p-2} F(x)$ be non-negative). This leaves $p^*\leq p\leq2$ as the only rigorously verified range for the absolute monotonicity of $\mathcal{F}_p''(x)$ in Theorem \ref{theorem-Fp}(iii).
\end{remark}

\begin{theorem}\label{theorem-Fp-2}{\bf (Case $c<a+b$)}\\
Let $\max\{a,b\}<c<a+b$ and define $\mathcal{F}_p$ as in Theorem \ref{theorem-Fp}. Let $p_*$ and $p^*$ be the same as those in Lemma \ref{lemma-tau}. Then:
\begin{enumerate}[leftmargin=2.5em,itemindent=0em, label=(\roman*)]
	\item $-\mathcal{F}'_p$ is AM on $(0,1)$ if and only if $\frac{ab}{c}\leq p\leq a+b+1-c$;
	\item $-\mathcal{F}''_p$ is AM on $(0,1)$ if and only if $p_*\leq p\leq a+b+1-c$;
	\item If $p^*\leq p\leq a+b+2-c$, then $\mathcal{F}''_p$ is AM on $(0,1)$.
\end{enumerate}
\end{theorem}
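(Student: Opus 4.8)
The plan is to reduce the entire statement to Theorem~\ref{theorem-Fp} via the symmetry relation \eqref{sym-relation}. Since $\max\{a,b\}<c$, the numbers $a':=c-a$ and $b':=c-b$ are positive; setting $c':=c$ and $p':=p+c-a-b$, the identity \eqref{sym-relation} gives
\[
\mathcal{F}_p(x)=(1-x)^pF(a,b;c;x)=(1-x)^{p'}F(a',b';c';x).
\]
Because $c<a+b$ we have $a'+b'=2c-a-b<c=c'$, so the triple $(a',b',c')$ lies in the regime $c'\geq a'+b'$ of Theorem~\ref{theorem-Fp}, and $(c'-a')(c'-b')=ab>0$ is exactly the hypothesis of Lemma~\ref{lemma-tau}. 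Hence $\mathcal{F}_p$ coincides on $(0,1)$ with the reference function $(1-x)^{p'}F(a',b';c';x)$ together with all its derivatives, so the absolute monotonicity of $-\mathcal{F}'_p$ and of $\pm\mathcal{F}''_p$ is governed directly by Theorem~\ref{theorem-Fp}(i)--(iii) applied to $(a',b',c',p')$; the necessity arguments inside that theorem (limits at $0^+$ and at $1^-$ via \eqref{c>a+b}) also apply verbatim, since $c'>a'+b'$ places us in the finite, non-zero-balanced case.

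Next I would translate the parameter windows. The elementary identity $(c-a)(c-b)=c(c-a-b)+ab$ yields $\tfrac{a'b'}{c'}=\tfrac{ab}{c}+(c-a-b)$, so $\tfrac{a'b'}{c'}\leq p'$ is equivalent to $\tfrac{ab}{c}\leq p$. Moreover the discriminant quantity $\tfrac{ab(c-a)(c-b)}{c^2(c+1)}$ appearing in Lemma~\ref{lemma-tau} is invariant under $(a,b)\mapsto(c-a,c-b)$, so the roots of $\tau(p';a',b',c')$ are precisely $p'_*=p_*+(c-a-b)$ and $p'^{*}=p^{*}+(c-a-b)$, where $p_*,p^{*}$ are the roots of $\tau(p;a,b,c)$ from Lemma~\ref{lemma-tau}. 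Combining these with the trivial equivalences $p'\leq1\iff p\leq a+b+1-c$ and $p'\leq2\iff p\leq a+b+2-c$, the three conclusions of Theorem~\ref{theorem-Fp} become, respectively: $-\mathcal{F}'_p$ is AM iff $\tfrac{ab}{c}\leq p\leq a+b+1-c$; $-\mathcal{F}''_p$ is AM iff $p_*\leq p\leq a+b+1-c$; and $p^{*}\leq p\leq a+b+2-c$ forces $\mathcal{F}''_p$ to be AM. These are exactly (i), (ii), (iii).

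I do not expect a genuine obstacle in this argument: the only substantive computations are the two ``shift'' identities — for $\tfrac{a'b'}{c'}$ and for the roots of $\tau$ — both of which are short. The point deserving the most care is simply confirming that the transformed triple meets every hypothesis invoked ($a',b',c'\in\mathbb{R}_+$, $c'\geq a'+b'$, and $(c'-a')(c'-b')\geq0$), all of which follow immediately from $\max\{a,b\}<c<a+b$.
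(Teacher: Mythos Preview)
Your proposal is correct and takes essentially the same approach as the paper: reduce to Theorem~\ref{theorem-Fp} via the symmetry relation~\eqref{sym-relation} with $\bar a=c-a$, $\bar b=c-b$, $\bar p=p+c-a-b$, then translate the parameter windows back using the shift identities $\bar a\bar b/c=ab/c+(c-a-b)$ and $\bar p_*=p_*+(c-a-b)$, $\bar p^{*}=p^{*}+(c-a-b)$. Your verification of these shifts and of the hypotheses on $(\bar a,\bar b,c)$ is in fact slightly more explicit than the paper's.
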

\proof
Use the symmetry relation for Gaussian hypergeometric functions \eqref{sym-relation},
we rewrite $\mathcal{F}_p(x)$ as
\begin{equation*}
	\mathcal{F}_p(x)=(1-x)^{\bar{p}}F(\bar{a},\bar{b};c;x),
\end{equation*} 
where $\bar{a}=c-a$, $\bar{b}=c-b$ and $\bar{p}=p+c-a-b$.
Since $c<a+b$ and $c>\max\{a,b\}$, we have  $\bar{a}+\bar{b}=2c-(c+b)<c$ and $\bar{a},\bar{b}>0$. Thus, $\bar{a},\bar{b},c$ satisfy the $c\geq a+b$ case in Theorem \ref{theorem-Fp}.  Translating the conclusions of Theorem \ref{theorem-Fp} back to original parameters:
\begin{enumerate}[leftmargin=2.5em,label=(\roman*)]
	\item $\bar{a}\bar{b}/c\leq \bar{p}\leq1\Longrightarrow \frac{ab}{c}\leq p\leq a+b+1-c$;
	\item $\bar{p}_*\leq \bar{p}\leq1\Longrightarrow p_*\leq p\leq a+b+1-c$, where $\bar{p}_*=p^*+c-a-b$ is the smaller root of quadratic function $\bar{\tau}(p)=\tau(p;\bar{a},\bar{b},c)$. 
	\item $\bar{p}^* \leq \bar{p} \leq 2 \Longrightarrow p^* \leq p \leq a+b+2 - c$ where $\bar{p}^* = p^* + c - a - b$ is the larger root of quadratic function $\bar{\tau}(p)$. \qed
\end{enumerate}

\begin{corollary}\label{corollary1}{\bf (Zero-Balanced Case $c=a+b$)}\\
For $c=a+b$, the function $\mathcal{F}_p(x)=(1-x)^pF(a,b;a+b;x)$ satisfies:
\begin{enumerate}[leftmargin=2em,itemindent=0em, label=(\roman*)]
	\item $-\mathcal{F}'_p$ is AM on $(0,1)$ if and only if $\frac{ab}{a+b}\leq p\leq1$.
	\item $-\mathcal{F}''_p$ is AM on $(0,1)$ if and only if 
	$\frac{1}{2}+\frac{ab}{a+b}-\sqrt{\frac{1}{4}+\frac{a^2b^2}{(a+b)^2(a+b+1)}}\leq p\leq1$
	\item If $	\frac{1}{2}+\frac{ab}{a+b}+\sqrt{\frac{1}{4}+\frac{a^2b^2}{(a+b)^2(a+b+1)}}\leq p\leq2$,  then $\mathcal{F}''_p$ is AM on $(0,1)$.
\end{enumerate}
\end{corollary}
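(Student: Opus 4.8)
The plan is to obtain Corollary \ref{corollary1} as the boundary case $c=a+b$ of Theorem \ref{theorem-Fp}. First I would check that the hypotheses of that theorem are met: for $a,b\in\mathbb{R}_+$ the choice $c=a+b$ satisfies $c\geq a+b$ (with equality), so $\mathcal{F}_p(x)=(1-x)^pF(a,b;a+b;x)$ falls under the ``Case $c\geq a+b$'' of Theorem \ref{theorem-Fp}; moreover $(c-a)(c-b)=b\cdot a=ab>0$, which is compatible with the sign condition $(c-a)(c-b)\geq0$ that Lemma \ref{lemma-tau}, and hence the definition of the roots $p_*,p^*$ of $\tau(p)$, relies on.

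Next I would specialize the quantities in Theorem \ref{theorem-Fp} to $c=a+b$. The threshold $ab/c$ becomes $ab/(a+b)$, and inserting $c=a+b$ together with $(c-a)(c-b)=ab$ into the formulas of Lemma \ref{lemma-tau} gives
\[
 p_*=\frac{1}{2}+\frac{ab}{a+b}-\sqrt{\frac{1}{4}+\frac{a^2b^2}{(a+b)^2(a+b+1)}},\qquad
 p^*=\frac{1}{2}+\frac{ab}{a+b}+\sqrt{\frac{1}{4}+\frac{a^2b^2}{(a+b)^2(a+b+1)}}.
\]
With these identifications, parts (i), (ii) and (iii) of the corollary are exactly parts (i), (ii) and (iii) of Theorem \ref{theorem-Fp}, and the proof is complete.

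Being a pure specialization, this statement has no genuine obstacle; the only delicate point is that $F(a,b;a+b;x)$ has a logarithmic singularity as $x\to1^-$ rather than a finite limit. But the necessity arguments in the proof of Theorem \ref{theorem-Fp} already incorporate the zero-balanced case through Ramanujan's asymptotic formula \eqref{c=a+b} --- in particular the key deduction $\mathcal{F}_{p-1}(1^-)=0$ for $p>1$ remains valid there --- so no modification is needed. As a consistency check, taking $a=b=\tfrac12$ and $c=1$ recovers the Yang--Tian results for $\mathcal{K}(\sqrt{x})$ cited in the introduction: part (i) then reads $\tfrac14\leq p\leq 1$, in agreement with \cite{YT-AMS-2022}.
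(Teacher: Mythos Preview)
Your proposal is correct and matches the paper's approach: the corollary is stated without its own proof precisely because it is the direct specialization of Theorem~\ref{theorem-Fp} to $c=a+b$, with $p_*$ and $p^*$ computed by substituting $c=a+b$ and $(c-a)(c-b)=ab$ into the formulas of Lemma~\ref{lemma-tau}. Your additional remarks on the logarithmic singularity and the $a=b=\tfrac12$ sanity check are accurate but go beyond what the paper records.
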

\begin{remark}
Corollary \eqref{corollary1}(i) represents a significant improvement over the result in \cite[Theorem 1.1(i)]{WZ-IJPAM-2025}. The original work in  \cite{WZ-IJPAM-2025} suffered from two key limitations that restricted its applicability: it not only imposed an extraneous constraint $p\geq ab-1$ but also included a computationally cumbersome inequality that placed additional restrictions on the parameters $a$ and $b$. In contrast, the present study derives the exact necessary and sufficient condition $\frac{ab}{a+b}\leq p\leq1$ for the absolute monotonicity of $-\mathcal{F}'_p$ on $(0,1)$ with a natural constraint $ab\leq a+b$.
\end{remark}

\subsection{Absolute Monotonicity of $\mathcal{G}_p$}	

In this section, we establish a necessary and sufficient condition for $\mathcal{G}_p$ in the case $a+b\geq c$, and a sufficient condition for $-\mathcal{G}'_p$ in the zero-balanced case.
\begin{theorem}\label{theorem-Gp}{\bf (Case $c\leq a+b$)}\\
	Let $c\leq a+b$ and define $\mathcal{G}_p(x)=(1-x)^p\exp\left(F(a,b;c;x)\right)$ on $(0,1)$. Then the following statements hold:
	\begin{itemize}[leftmargin=2em]
		\item[(i)] $\mathcal{G}_p$ and $\mathcal{G}'_p$ are both AM on $(0,1)$ if and only if $p\leq\frac{ab}{c}$.  
		\item[(ii)] For $c=a+b$ with $a+b\geq 2ab(a+b+1)$,  $-\mathcal{G}'_p$ is AM on $(0,1)$ if $\frac{ab(2a+2b+1)}{(a+b)(a+b+1)}\leq p\leq1$.
	\end{itemize}
\end{theorem}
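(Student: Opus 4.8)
The plan is to treat the two statements separately, in each case combining logarithmic differentiation with the power‑series tools of Section~2.

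\emph{Part (i).} Since $\mathcal{G}_p>0$ on $(0,1)$ and is analytic at $0$, asking that both $\mathcal{G}_p$ and $\mathcal{G}_p'$ be AM is equivalent to asking that every Maclaurin coefficient of $\mathcal{G}_p$ be non‑negative. From $\ln\mathcal{G}_p(x)=p\ln(1-x)+F(a,b;c;x)$ one gets $(\ln\mathcal{G}_p)'(x)=\tfrac{ab}{c}F(a+1,b+1;c+1;x)-\tfrac{p}{1-x}$ and $\mathcal{G}_p'=\mathcal{G}_p\cdot(\ln\mathcal{G}_p)'$ with $\mathcal{G}_p(0)=e>0$. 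For necessity, if $\mathcal{G}_p'$ is AM then $\mathcal{G}_p'(0^+)=e\bigl(\tfrac{ab}{c}-p\bigr)\ge0$, so $p\le\tfrac{ab}{c}$. For sufficiency, when $p\le\tfrac{ab}{c}$ the $n$‑th coefficient of $(\ln\mathcal{G}_p)'$ is $\tfrac{ab}{c}\cdot\tfrac{(a+1)_n(b+1)_n}{(c+1)_n n!}-p$; the ratio of consecutive terms of $\bigl\{\tfrac{(a+1)_n(b+1)_n}{(c+1)_n n!}\bigr\}$ is $\tfrac{(a+1+n)(b+1+n)}{(c+1+n)(1+n)}$, whose numerator minus denominator equals $ab+(a+b-c)(1+n)>0$ since $c\le a+b$, so this sequence is $\ge$ its first value $1$, and hence every coefficient of $(\ln\mathcal{G}_p)'$ is $\ge\tfrac{ab}{c}-p\ge0$, i.e. $(\ln\mathcal{G}_p)'$ is AM. A standard induction on the coefficients in $\mathcal{G}_p'=\mathcal{G}_p\cdot(\ln\mathcal{G}_p)'$, starting from $\mathcal{G}_p(0)>0$, then forces all coefficients of $\mathcal{G}_p$ to be non‑negative.

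\emph{Part (ii).} Here $c=a+b$ and $p\le1$. Differentiation gives $-\mathcal{G}_p'(x)=(1-x)^{p-1}\exp\bigl(F(a,b;c;x)\bigr)\bigl[p-(1-x)F'(x)\bigr]$ with $F'(x)=\tfrac{ab}{c}F(a+1,b+1;c+1;x)$, but I would argue through Jurkat's criterion: write $\mathcal{G}_p=\dfrac{\exp(F(a,b;c;x))}{(1-x)^{-p}}=\dfrac{\sum_{n\ge0}E_nx^n}{\sum_{n\ge0}W_n(p)x^n}$. By Lemma~\ref{lemma-W_{n+1}/W_n} the sequence $\{W_{n+1}(p)/W_n(p)\}=\{(n+p)/(n+1)\}$ is increasing because $p\le1$; and the test function of $\exp(F)$ is $\mathcal{T}_p[\exp F]=\exp(F)\bigl[(1-x)F'(x)-p\bigr]$, so by Proposition~\ref{prop-method} the sequence $\{E_n/W_n(p)\}$ is decreasing if and only if $\Psi(x):=\exp(F)\bigl[p-(1-x)F'(x)\bigr]$ is AM on $(0,1)$; granting this, Proposition~\ref{prop-Jurkat} yields that $-\mathcal{G}_p'$ is AM. Thus everything reduces to $(n+1)E_{n+1}\le(n+p)E_n$ for all $n\ge0$, and since $(n+p)/(n+1)$ is increasing in $p$ it suffices to treat the endpoint $p=p_0:=\tfrac{ab(2a+2b+1)}{(a+b)(a+b+1)}$.

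For $p=p_0$ I would invoke Lemma~\ref{lemma-H}: with $c=a+b$ one has $(c-a)(c-b)=ab\ge0$, $c\ge a+b-1$, and $(1+a+b-ab)c\ge2ab$ — the last of these is precisely where the hypothesis $a+b\ge2ab(a+b+1)$ enters — so Lemma~\ref{lemma-H} and its proof apply and give that $\{F_{n+1}/F_n\}$ is increasing and that $-H'$ is AM, where $H(x)=\tfrac{(1-x)F(a+1,b+1;c+1;x)}{F(a,b;c;x)}$; since $H(0)=1$, integrating shows $1-H$ is AM. Using the identity $(1-x)F'(x)=\tfrac{ab}{c}H(x)F(x)$ and the recurrence $(n+1)E_{n+1}=\sum_{k=0}^{n}f_kE_{n-k}$ coming from $(\exp F)'=\exp(F)F'$ (with $f_k=\tfrac{ab}{c}\tfrac{(a+1)_k(b+1)_k}{(c+1)_k k!}$), the inequality $(n+1)E_{n+1}\le(n+p_0)E_n$ becomes a comparison of convolution sums that I would settle by exploiting the monotonicity of $\{F_{n+1}/F_n\}$ together with the non‑negativity of the Maclaurin coefficients of $1-H$.

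I expect this last step to be the main obstacle: it is exactly step (iii) of the paper's scheme — constructing the auxiliary AM function — and amounts to controlling how fast the coefficients of $\exp(F(a,b;a+b;x))$ grow relative to those of $(1-x)^{-p_0}$; the role of the hypothesis $a+b\ge2ab(a+b+1)$ is to make the auxiliary function $H$ available (via Lemma~\ref{lemma-H}) so that this can be done without the inductive coefficient estimates of earlier work. By contrast, the logarithmic‑derivative identities, the reductions to coefficient positivity, and the applications of Lemma~\ref{lemma-W_{n+1}/W_n}, Propositions~\ref{prop-method} and~\ref{prop-Jurkat}, and Lemma~\ref{lemma-H} are all routine.
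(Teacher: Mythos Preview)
Your argument is correct and in fact cleaner than the paper's. The paper writes $\mathcal{G}_p=\exp(F)/(1-x)^{-p}$ and applies Jurkat's criterion, but since $p$ may exceed $1$ it has to run an induction on $k$ with $k<p\le k+1$, re-applying Jurkat at each stage. You bypass all of this: you show directly that $(\ln\mathcal{G}_p)'$ has non-negative Maclaurin coefficients (your computation of the ratio $\tfrac{(a+1+n)(b+1+n)}{(c+1+n)(1+n)}$ is correct), whence $\ln\mathcal{G}_p$ is AM and so $\mathcal{G}_p=\exp(\ln\mathcal{G}_p)$ is AM, being a composition of AM functions. This is the more elementary route.

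\textbf{Part (ii).} Your reduction via Jurkat to the absolute monotonicity of $\Psi=-\mathcal{T}_p\hat g=\exp(F)\bigl[p-(1-x)F'(x)\bigr]$ matches the paper exactly, and your observation that it suffices to treat $p=p_0$ is fine. But the remaining step --- the one you flag as ``the main obstacle'' --- is genuinely missing, and your proposed route through Lemma~\ref{lemma-H} and a ``comparison of convolution sums'' is not how the paper proceeds, nor is it clear it can be made to work. The paper's key idea is a \emph{factorization}: using $(1-x)F(a+1,b+1;a+b+1;x)=F(a,b;a+b+1;x)$ one writes
\[
\Psi(x)=\underbrace{\Bigl[p-\tfrac{ab}{a+b}F(a,b;a+b+1;x)\Bigr](1-x)^{-\frac{ab}{a+b}}}_{G(x)}\cdot\;\mathcal{G}_{\frac{ab}{a+b}}(x).
\]
The second factor is AM by part~(i). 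For the first, one expands $G(x)=\sum\bigl(p\bar W_n-\tfrac{ab}{a+b}\bar B_n\bigr)x^n$ with $\bar W_n=W_n(\tfrac{ab}{a+b})$ and $\bar g=(1-x)^{-ab/(a+b)}F(a,b;a+b+1;x)=\sum\bar B_nx^n$, and computes the test function $\mathcal{T}_{ab/(a+b)}\bar g=\tfrac{ab}{a+b+1}(1-x)^{1-ab/(a+b)}F(a+1,b+1;a+b+2;x)$. The hypothesis $a+b\ge 2ab(a+b+1)$ enters \emph{here}: it is exactly the condition $\tfrac{(a+1)(b+1)}{a+b+2}\le 1-\tfrac{ab}{a+b}$ needed to apply Theorem~\ref{theorem-Fp}(i) to this last expression, yielding that $\{\bar B_n/\bar W_n\}$ is unimodal with maximum $\bar B_1/\bar W_1=\tfrac{2a+2b+1}{a+b+1}$, so $p\bar W_n-\tfrac{ab}{a+b}\bar B_n\ge0$ for $p\ge p_0$. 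Your identification of the hypothesis with the Lemma~\ref{lemma-H} condition $(1+a+b-ab)c\ge2ab$ is not where it is actually used (that inequality is in fact weaker), and the convolution comparison you sketch does not substitute for this factorization-plus-Theorem~\ref{theorem-Fp}(i) argument.
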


\begin{proof}
	(i) 	\textbf{\textsl{Necessity.}}  Suppose $\mathcal{G}_p$  and $\mathcal{G}'_p$ are both absolutely monotonic on $(0,1)$.  Evaluating the right-hand limit of $\mathcal{G}'_p(x)$ at $x=0^+$:
	\begin{equation*}
		\mathcal{G}'_p(0^+)=\lim_{x\to0^+}\frac{d}{dx}\left[(1-x)^p\exp(F(a,b;c;x))\right]=e\left(\frac{ab}{c}-p\right).
	\end{equation*}Absolute monotonicity of $\mathcal{G}'_p(x)$ requires $\mathcal{G}'_p(0^+)\geq0$, so  $p\leq \frac{ab}{c}$.
	
	\textbf{\textsl{Sufficiency.}}	We first consider the case $p \leq 0$. For this range, $(1-x)^p$ is absolutely monotonic (all its derivatives on $(0, 1)$ are non-negative), and $\exp\left(F(a,b;c;x)\right)$ is also absolutely monotonic (the derivative of the exponential function is itself, and $F(a,b;c;x) > 0$ on $(0, 1)$). The product of two absolutely monotonic functions is absolutely monotonic, so $\mathcal{G}_p$ and $\mathcal{G}_p'$ are absolutely monotonic for $p \leq 0$.
	
	For $p>0$, we proceed by induction on $k\in\mathbb{N}$ to show the result holds for all $p\leq \frac{ab}{c}$. 
    \begin{itemize}[leftmargin=1em]
    	\item {\bf Base Case ($0<p\leq \min\{1,\frac{ab}{c}\}$):}\\
    	Rewrite $\mathcal{G}_p(x)$ as a ratio of power series (consistent with Jurkat's criterion in Proposition \ref{prop-Jurkat}):
    	\begin{equation}\label{Gp-1}
    		\mathcal{G}_p(x)=\frac{\exp\left(F(a,b;c;x)\right)}{(1-x)^{-p}}=\frac{g_0(x)}{(1-x)^{-p}},
    	\end{equation}where
    	$g_0(x)=\exp(F(a,b;c;x))=\sum_{n=0}^\infty B_nx^n$ (with $B_n>0$ for all $n\geq0$) and $W_n=\frac{(p)_n}{n!}$. Construct the test function $\mathcal{T}_pg_0(x)$ (as in Proposition \ref{prop-method}) for $g_0(x)$:
    \begin{align}\nonumber
    		\mathcal{T}_pg_0(x)&=\left[\frac{ab}{c}(1-x)F(a+1,b+1;c+1;x)-p\right]\exp\left(F(a,b;c;x)\right)\\ \label{Tpg0}
    	&=\left[\left(\frac{ab}{c}-p\right)+\sum_{n=1}^{\infty}\frac{[ab+(a+b-c)n](a)_n(b)_n}{(c)_{n+1}n!}x^n\right]\exp\left(F(a,b;c;x)\right)
    \end{align}
    	Since $p\leq \frac{ab}{c}$ and $c\leq a+b$, $\mathcal{T}_pg_0(x)$  is the product of two absolutely monotonic functions-- hence absolutely monotonic itself.  By Proposition \ref{prop-method}, $\{B_n/W_n\}_{n\geq0}$ is increasing. By Lemma \ref{lemma-W_{n+1}/W_n}, for $0<p\leq1$, the sequence $\{W_{n+1}/W_n\}_{n\geq0}$ is increasing. Combining this with Jurkat's criterion (Proposition \ref{prop-Jurkat}) and \eqref{Gp-1},  $\mathcal{G}'_p$ is absolutely monotonic. Since $\mathcal{G}_p(0)=e>0$, $\mathcal{G}_p$ is also absolutely monotonic on $(0,1)$.
    	\item {\bf Inductive Step:}\\
    	Assume that for some $k\geq1$, $\mathcal{G}_p$ is absolutely monotonic on $(0,1)$ whenever $p\leq\min\{k,\frac{ab}{c}\}$. We now show the result extends to $k<p\leq \min\{k+1,\frac{ab}{c}\}$.
    	For this range, $k<\frac{ab}{c}$, so the inductive hypothesis implies $\mathcal{G}_k(x)=(1-x)^k\exp(F(a,b;c;x))$ is absolutely monotonic.
    	  Rewrite $\mathcal{G}_p$ as
    	  \begin{equation}\label{Fp-2}
    	  	\mathcal{G}_p(x)=\frac{(1-x)^k\exp\left(F(a,b;c;x)\right)}{(1-x)^{-(p-k)}}=\frac{g_k(x)}{\sum_{n=0}^\infty W^{^k}_nx^n},
    	  \end{equation}where $g_k(x)=\mathcal{G}_k(x)=\sum_{n=0}^{\infty}B^{^k}_nx^n$ and $W^{^k}_n=W_n(p-k)$ (with $W_n(\cdot)$ defined in \eqref{phi-Wn}).  Construct the test function $\mathcal{T}_{p-k}g_k(x)$ for $g_k(x)$:
    	  \begin{align*}
    	  \mathcal{T}_{p-k}g_k(x)&=\left[\frac{ab}{c}(1-x)F(a+1,b+1;c+1;x)-k-(p-k)\right]\mathcal{G}_k(x)\\
    	  &=\left[\left(\frac{ab}{c}-p\right)+\sum_{n=0}^{\infty}\frac{[ab+(a+b-c)n](a)_n(b)_n}{(c)_{n+1}n!}x^n\right]\mathcal{G}_k(x).
    	  \end{align*}
    	  Since $p\leq \frac{ab}{c}$ and $a+b\geq c$, the term in brackets is absolutely monotonic (as shown in the base case), and $\mathcal{G}_k(x)$ is absolutely monotonic by the inductive hypothesis. Thus, $\mathcal{T}_{p-k}g_k(x)$ is absolutely monotonic. By Proposition \ref{prop-method}, $\{B^{^k}_n/W^{^k}_n\}_{n\geq0}$ is increasing. Since $0<p-k\leq1$, Lemma \ref{lemma-W_{n+1}/W_n} implies $\{W^{^k}_{n+1}/W^{^k}_n\}_{n\geq0}$ is increasing. By Jurkat's criterion (Proposition \ref{prop-Jurkat}), $\mathcal{G}_p'$ is absolutely monotonic,and  $\mathcal{G}_p$ inherits absolutely monotonic (as $\mathcal{G}_p(0)=e>0$). By induction, $\mathcal{G}_p$ and $\mathcal{G}'_p$ are absolutely monotonic for all $p\leq \frac{ab}{c}$.
    \end{itemize}
	
	\medskip
	
	\noindent(ii) \textbf{Absolute Monotonicity of $-\mathcal{G}'_p(x)$ for $c=a+b$}
	
	For $c=a+b$ with $a+b\geq2ab(a+b+1)$, we first verify the interval $\left[\frac{ab(2a+2b+1)}{(a+b)(a+b+1)},1\right]$ is well-defined: 
	\begin{equation*}
		\frac{ab(2a+2b+1)}{(a+b)(a+b+1)}<\frac{2ab(a+b+1)}{a+b}\leq1,
	\end{equation*} 
	where the final inequality follows from $a+b\geq2ab(a+b+1)$.
	
	Rewrite $\mathcal{G}_p(x)$ as a power series ratio (analogous to \eqref{Gp-1}):
	\begin{equation}
		\mathcal{G}_p(x)=\frac{\exp(F(a,b;a+b;x))}{(1-x)^{-p}}=\frac{\hat{g}(x)}{\sum_{n=0}^{\infty}W_nx^n},
	\end{equation}
	where $\hat{g}(x)=\exp(F(a,b;c;x))=\sum_{n=0}^{\infty}\hat{B}_nx^n$ and $W_n=\frac{(p)_n}{n!}$. To analyze $-\mathcal{G}'_p(x)$, Proposition \ref{prop-method} dictates we study the test function $-\mathcal{T}_p\hat{g}(x)$:
	\begin{equation*}
		-\mathcal{T}_p\hat{g}(x)=\left[p-\frac{ab}{a+b}F(a,b;a+b+1)\right]\exp(F(a,b;a+b;x))=G(x)\cdot\mathcal{G}_{\frac{ab}{a+b}}(x),
	\end{equation*}
	where $G(x)=\left[p-\frac{ab}{a+b}F(a,b;a+b+1)\right](1-x)^{-\frac{ab}{a+b}}.$
	
	Express $G(x)$ as a power series: 
	\begin{equation*}
		G(x)=p(1-x)^{-\frac{ab}{a+b}}-\frac{ab}{a+b}\bar{g}(x)=\sum_{n=0}^\infty\left(p\bar{W}_n-\frac{ab}{a+b}\bar{B}_n\right)x^n,
	\end{equation*}with $\bar{g}(x)=(1-x)^{-\frac{ab}{a+b}}F(a,b;a+b+1;x)=\sum_{n=0}^\infty \bar{B}_nx^n$ and $\bar{W}_n=W_n(\frac{ab}{a+b})$. Next, consider the test function $\mathcal{T}_{\frac{ab}{a+b}}\bar{g}(x)$ for $\bar{g}(x)$:
	\begin{equation}\label{Tg}
		\mathcal{T}_{\frac{ab}{a+b}}\bar{g}(x)=\frac{ab}{a+b+1}(1-x)^{1-\frac{ab}{a+b}}F(a+1,b+1;a+b+2;x).
	\end{equation}
	From $a+b\geq2ab(a+b+1)$, we derive:
	\begin{equation*}
		1-\frac{ab}{a+b}-\frac{(a+1)(b+1)}{a+b+2}=\frac{a+b-2ab(a+b+1)}{(a+b)(a+b+2)}\geq0,
	\end{equation*}
	so  $\frac{(a+1)(b+1)}{a+b+2}\leq1-\frac{ab}{a+b}<1$. 
	Combining this with \eqref{Tg} and Theorem \ref{theorem-Fp}(i),  $-\left[\mathcal{T}_{\frac{ab}{a+b}}\bar{g}(x)\right]'$ is absolutely monotonic on $(0,1)$. Using \eqref{V_n/W_n} and $\mathcal{T}_{\frac{ab}{a+b}}\bar{g}(0)=\frac{ab}{a+b+1}>0$,  the sequence $\{\bar{B}_n/\bar{W}_n\}$ is increasing for $0\leq n\leq1$ and decreasing for $n\geq1$. So $\frac{\bar{B}_n}{\bar{W}_n}\leq \frac{\bar{B}_1}{\bar{W}_1}=\frac{2a+2b+1}{a+b+1}$
	for all $n\geq0$. For $\frac{ab(2a+2b+1)}{(a+b)(a+b+1)}\leq p\leq1$:  
	\begin{align*}
		p\bar{W}_n-\frac{ab}{a+b}\bar{B}_n&\geq\frac{ab(2a+2b+1)}{(a+b)(a+b+1)}\bar{W}_n-\frac{ab}{a+b}\bar{B}_n\\
		&=\frac{ab\bar{W}_n}{a+b}\left(\frac{2a+2b+1}{a+b+1}-\frac{\bar{B}_n}{\bar{W}_n}\right)\geq0
	\end{align*}Thus,  $G(x)$ is absolutely monotonic. By part (i), $\mathcal{G}_{\frac{ab}{a+b}}$ is also absolutely monotonic, so $-\mathcal{T}_p\hat{g}(x)$ (the product of two absolutely monotonic functions) is absolutely monotonic. By Proposition \ref{prop-method}, $\{\widehat{B}_n/W_n\}_{n\geq0}$ is decreasing; by Lemma \ref{lemma-W_{n+1}/W_n}, $\{W_{n+1}/W_n\}_{n\geq0}$ is increasing. Combining these with Jurkat's criterion (Proposition \ref{prop-Jurkat}),  $-\mathcal{G}'_p$ is absolutely monotonic on $(0,1)$.
\end{proof}

\begin{theorem}{\bf (Case $c>a+b$)}\label{theorem-Gp-2}\\
Let $c\geq a+b+ab$ and $\mathcal{G}_p(x)$ be defined as in Theorem \ref{theorem-Gp}. Then $-\mathcal{G}'_p$ is AM on $(0,1)$ if and only if $\frac{ab}{c}\leq p\leq1$.
\end{theorem}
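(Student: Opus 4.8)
The plan is to run the four-step scheme advertised in the introduction, handling necessity and sufficiency separately. Note first that $c\ge a+b+ab$ forces $c>a+b$, so $F(a,b;c;1^-)$ is finite by \eqref{c>a+b}; this finiteness is what powers the necessity argument.

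\emph{Necessity.} Differentiating $\mathcal{G}_p$ and letting $x\to0^+$ gives $\mathcal{G}'_p(0^+)=e\big(\tfrac{ab}{c}-p\big)$, so absolute monotonicity of $-\mathcal{G}'_p$ forces $-\mathcal{G}'_p(0^+)\ge0$, i.e. $p\ge\tfrac{ab}{c}$. For $p\le1$ I would argue by contradiction: if $p>1$, use the factorization
\[
-\mathcal{G}'_p(x)=(1-x)^{p-1}\exp\big(F(a,b;c;x)\big)\Big[p-\tfrac{ab}{c}(1-x)F(a+1,b+1;c+1;x)\Big].
\]
As $x\to1^-$, the first factor tends to $0$; $\exp(F(a,b;c;x))$ tends to the finite value $\exp(F(a,b;c;1^-))$ since $c>a+b$; and $(1-x)F(a+1,b+1;c+1;x)\to0$ in each of the three regimes \eqref{c>a+b}, \eqref{c=a+b}, \eqref{c<a+b} (the relevant singular exponent $c-a-b$ is positive), so the bracket tends to $p$. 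Hence $\lim_{x\to1^-}(-\mathcal{G}'_p(x))=0$. But an absolutely monotonic function on $(0,1)$ is non-negative and non-decreasing, so a limit of $0$ at $1^-$ forces $-\mathcal{G}'_p\equiv0$, making $\mathcal{G}_p$ constant; this contradicts $\mathcal{G}_p(0^+)=e\ne0=\mathcal{G}_p(1^-)$. (Equivalently, one can expand $\mathcal{G}_{p-1}=(1-x)^{p-1}\exp(F)=\sum B^*_nx^n$, note $\{B^*_n\}$ is non-increasing, deduce $B^*_n\ge0$ from $\mathcal{G}_{p-1}(1^-)=0$, and reach the same contradiction since then $\mathcal{G}_{p-1}\ge B^*_0=e$.)

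\emph{Sufficiency.} Assume $\tfrac{ab}{c}\le p\le1$ and write $\mathcal{G}_p(x)=g(x)/(1-x)^{-p}=\big(\sum\hat{B}_nx^n\big)/\big(\sum W_nx^n\big)$ with $g(x)=\exp(F(a,b;c;x))$ (so $\hat{B}_n>0$) and $W_n=(p)_n/n!$. Lemma \ref{lemma-W_{n+1}/W_n} gives step (ii): $\{W_{n+1}/W_n\}$ is increasing since $p\le1$ (and $W_n>0$ since $p\ge ab/c>0$). For step (iii), Proposition \ref{prop-method} reduces matters to showing the test function
\[
-\mathcal{T}_pg(x)=\Big[p-\tfrac{ab}{c}(1-x)F(a+1,b+1;c+1;x)\Big]\exp\big(F(a,b;c;x)\big)=:P(x)\exp\big(F(a,b;c;x)\big)
\]
is absolutely monotonic, i.e. (since $\exp(F(a,b;c;x))$ is AM and a product of AM functions is AM) that $P$ has non-negative Maclaurin coefficients. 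Writing $(1-x)F(a+1,b+1;c+1;x)=1+\sum_{n\ge1}(d_n-d_{n-1})x^n$ with $d_n=\frac{(a+1)_n(b+1)_n}{(c+1)_nn!}$, one computes $d_n-d_{n-1}=d_{n-1}\,\frac{ab+(a+b-c)n}{n(c+n)}$, and $c\ge a+b+ab$ (i.e. $c-a-b\ge ab$) makes $ab+(a+b-c)n\le0$ for every $n\ge1$. Hence the coefficients of $-\tfrac{ab}{c}(1-x)F(a+1,b+1;c+1;x)$ are $\ge0$ for $n\ge1$ and equal $-\tfrac{ab}{c}$ for $n=0$, so $P(x)=\big(p-\tfrac{ab}{c}\big)+\sum_{n\ge1}(\text{non-negative})\,x^n$ has all coefficients $\ge0$ exactly because $p\ge\tfrac{ab}{c}$. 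Thus $\{\hat{B}_n/W_n\}$ is decreasing, and Jurkat's criterion (Proposition \ref{prop-Jurkat}) applied with $p(x)=g(x)$ and $q(x)=(1-x)^{-p}$ yields that $-\mathcal{G}'_p$ is absolutely monotonic on $(0,1)$.

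\emph{Expected obstacle.} The only non-mechanical step is (iii): recognizing that $-\mathcal{T}_pg$ splits as the explicit series $P$ times $\exp(F(a,b;c;x))$, and that the standing hypothesis $c\ge a+b+ab$ is precisely the condition forcing every coefficient of $P$ (for $n\ge1$) to be non-negative. The remaining ingredients — the recurrence for $d_n$, the three-case $x\to1^-$ limit bookkeeping (where $c>a+b$, and not merely $c\ge a+b$, is used), and the assembly via Jurkat's criterion — are routine once this structural observation is in hand.
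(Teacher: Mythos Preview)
Your proposal is correct and follows essentially the same route as the paper: the sufficiency argument is identical (write $\mathcal{G}_p=g/(1-x)^{-p}$, expand the test function $-\mathcal{T}_pg$ as $P(x)\exp(F)$, and use $c\ge a+b+ab$ to force the $n\ge1$ coefficients of $P$ to be non-negative, then invoke Lemma~\ref{lemma-W_{n+1}/W_n} and Jurkat), and your necessity argument is the same limit analysis at $x=0^+$ and $x=1^-$, only spelled out more explicitly than the paper does (your observation that an absolutely monotonic function with limit $0$ at $1^-$ must vanish identically is exactly the missing line in the paper's ``which forces $\mathcal{G}'_p$ not to be absolutely monotonic'').
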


\begin{proof}
\textbf{Necessity.} 
Suppose	$-\mathcal{G}'_p$ is absolutely monotonic, then $\mathcal{G}'_p(x)\leq0$ for all $x\in(0,1)$.
\begin{itemize}[leftmargin=2em]
	\item At $x\to0^+$: Evaluating the right-hand limit of $\mathcal{G}'_p(x)$ at $x=0^+$:
	\begin{equation*}
		\mathcal{G}'_p(0^+)=\lim_{x\to0^+}\frac{d}{dx}\left[(1-x)^p\exp(F(a,b;c;x))\right]=e\left(\frac{ab}{c}-p\right)\leq0,
	\end{equation*}which gives $p\geq\frac{ab}{c}$.
	\item At $x\to1^-$: For $c>a+b$, $F(a,b;c;1^-)$ is finite by \eqref{c>a+b}, and using the symmetry relation \eqref{sym-relation},  $(1-x)F(a+1,b+1;c+1;x)=(1-x)^{c-a-b}F(c-a,c-b;c+1;x)\to0$ (since  $c+1\geq(a+1)+(b+1)$ if $c\geq a+b+1$; $c+1>(c-a)+(c-b)$ if $a+b+ab<c<a+b+1$). Thus, if $p>1$,
	\begin{equation*}
		\mathcal{G}'_p(1^-)=\lim_{x\to1^-}(1-x)^{p-1}\left[\frac{ab}{c}(1-x)F(\cdot)-p\right]\exp(F(\cdot)),
	\end{equation*}which forces $\mathcal{G}'_p$ not to be absolutely monotonic. Thus, $p\leq1$ and collectively, $\frac{ab}{c}\leq p\leq1$.
\end{itemize}
	\noindent\textbf{Sufficiency.} Assume $\frac{ab}{c} \leq p \leq 1$, we use Jurkat’s criterion (Proposition \ref{prop-Jurkat}) and Proposition \ref{prop-method}.
    Following the form in \eqref{Gp-1}, it suffices to $-\mathcal{T}_pg_0(x)$ is absolutely monotonic.
    
    For $c\geq a+b+ab$:
	\begin{itemize}
		\item[$\circ$] $p-\frac{ab}{c}\geq0$ (constant term);
		\item[$\circ$] $(c-a-b)n-ab\geq c-a-b-ab\geq0$ for $n\geq1$ (series coefficients).
	\end{itemize}
	Thus, $-\mathcal{T}_pg_0(x)=\left[p-\frac{ab}{c}+\sum\text{non-neg. terms}\right]\exp(F(\cdot))$ (from \eqref{Tpg0})--a product of two absolutely monotonic functions, hence absolutely monotonic.
    
    Apply Jurkat's criterion:
    \begin{itemize}
    	\item[$\circ$] By Proposition \ref{prop-method}, $\{B_n/W_n\}_{n\geq0}$ is decreasing (since $-\mathcal{T}_pg_0(x)$ is absolutely monotonic);
    	\item[$\circ$] By Lemma \ref{lemma-W_{n+1}/W_n}, $\{W_{n+1}/W_n\}_{n\geq0}$ is increasing (since $p\leq1$).
    \end{itemize}
    By Proposition \ref{prop-Jurkat}, $-\mathcal{G}'_p=-\left(\frac{\sum B_nx^n}{\sum W_nx^n}\right)'$ is absolutely monotonic for $\frac{ab}{c}\leq p\leq1$.
\end{proof}

\subsection{Absolute Monotonicity of $\ln\mathcal{F}_p$}	
The absolute monotonicity of $\ln\mathcal{F}_p(x)$ is determined by the monotonicity of the Maclaurin coefficients of $\ln F(a,b;c;x)$.

\begin{theorem}\label{theorem-lnF}
	Let $\mathcal{F}_p(x)$ be defined in Theorem \ref{theorem-Fp}. and $C_n$ denote the Maclaurin coefficients of $\ln F(a,b;c;x)$ (i.e., $\ln F(a,b;c;x)=\sum_{n=1}^\infty C_nx^n$). Define two parameter regions:
	\begin{align*}
		\mathfrak{R}_1&=\{(a,b,c)\in\mathbb{R}_+^3|\ c\geq a+b-1,\ (1+a+b-ab)c\geq2ab,\ (c-a)(c-b)\geq0\},\\
			\mathfrak{R}_2&=\{(a,b,c)\in\mathbb{R}_+^3|\ c\geq a+b-1,\ (c-a)(c-b)\leq0\}.
	\end{align*}
	(i) For $(a,b,c)\in\mathfrak{R}_1$:
		\begin{itemize}[leftmargin=2em,itemindent=0em]
			\item[$\circ$]$\ln\mathcal{F}_p$ is AM on $(0,1)$ if and only if $p\leq\max\{0,a+b-c\}$;
			\item[$\circ$] $-\ln\mathcal{F}_p$ is AM on $(0,1)$ if and only if $p\geq\frac{ab}{c}$;
			\item[$\circ$] For $k\geq1$,  $\left(\ln\mathcal{F}_p\right)^{(k)}$ (resp. $-\left(\ln\mathcal{F}_p\right)^{(k)}$) is AM on $(0,1)$ if and only if $p\leq \max\{0,a+b-c\}$ (resp. $p\geq kC_k$);
		\end{itemize}
	(ii) For $(a,b,c)\in \mathfrak{R}_2$: 
		\begin{itemize}[leftmargin=2em,itemindent=0em]
			\item[$\circ$]$\ln\mathcal{F}_p$ is AM on $(0,1)$ if and only if $p\leq\frac{ab}{c}$;
			\item[$\circ$]$-\ln\mathcal{F}_p$ is AM on $(0,1)$ if and only if $p\geq a+b-c$;
			\item[$\circ$]For $k\geq1$, $\left(\ln\mathcal{F}_p\right)^{(k)}$ (resp. $-\left(\ln\mathcal{F}_p\right)^{(k)}$) is AM on $(0,1)$ if and only if $p\leq kC_k$ (resp. $p\geq a+b-c$).
		\end{itemize}
\end{theorem}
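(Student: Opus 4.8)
The plan is to compute the Maclaurin coefficients of $\ln\mathcal F_p$ explicitly from the function $H$ of Lemma~\ref{lemma-H}, and then to use the elementary fact that a power series convergent on $(0,1)$ is absolutely monotonic there precisely when all its coefficients are non-negative (the ``only if'' direction coming from evaluating successive derivatives at $0^+$). Since $\mathcal F_p(0)=1$ and $\ln\mathcal F_p(x)=p\ln(1-x)+\ln F(a,b;c;x)$, differentiating and using $F'(a,b;c;x)=\tfrac{ab}{c}F(a+1,b+1;c+1;x)$ gives, after multiplying by $1-x$,
\[
(1-x)\,(\ln\mathcal F_p)'(x)=\tfrac{ab}{c}H(x)-p ,\qquad H(0)=1 .
\]
First I would check that $\mathfrak R_1$ and $\mathfrak R_2$ both lie inside the hypotheses of Lemma~\ref{lemma-H}; for $\mathfrak R_2$ this needs only the remark that $(c-a)(c-b)\le 0$ together with $c\ge a+b-1$ force $c\ge ab$, hence $(1+a+b-ab)c\ge 2ab$. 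Then Lemma~\ref{lemma-H} tells us that $-H'$ (on $\mathfrak R_1$) resp. $H'$ (on $\mathfrak R_2$) is AM on $(0,1)$; as $H$ is analytic at $0$, this upgrades to an expansion $H(x)=1+\sum_{m\ge1}h_mx^m$ convergent on $(-1,1)$ with $h_m\le 0$ throughout $\mathfrak R_1$ and $h_m\ge 0$ throughout $\mathfrak R_2$.

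Comparing coefficients in the displayed identity (all the series involved then converge on $(-1,1)$), I get $(\ln\mathcal F_p)'(x)=\sum_{m\ge0}e_m x^m$ with $e_m=\tfrac{ab}{c}s_m-p$, where $s_m:=\sum_{j=0}^m h_j$; integrating once, $nC_n=\tfrac{ab}{c}s_{n-1}$ for $n\ge1$, so in particular $C_1=\tfrac{ab}{c}$. Since $\{h_m\}_{m\ge1}$ is one-signed, $\{s_m\}_{m\ge0}$ — equivalently $\{nC_n\}_{n\ge1}$ — is monotone (decreasing on $\mathfrak R_1$, increasing on $\mathfrak R_2$), starting at $s_0=1$ with limit $s_\infty:=\lim_m s_m=H(1^-)$ (monotone convergence plus Abel's theorem, using $H>0$ on $(0,1)$). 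Applying the coefficient criterion to $(\ln\mathcal F_p)^{(k)}(x)=\sum_{m\ge k-1}e_m\tfrac{m!}{(m-k+1)!}x^{m-k+1}$ for $k\ge1$, and to $\ln\mathcal F_p$ itself (whose constant term vanishes, so the case $k=0$ coincides with $k=1$): $(\ln\mathcal F_p)^{(k)}$ is AM iff $e_m\ge0$ for all $m\ge\max(k-1,0)$, and $-(\ln\mathcal F_p)^{(k)}$ is AM iff $e_m\le0$ for all such $m$. By monotonicity the governing inequalities become, on $\mathfrak R_1$, $p\le\tfrac{ab}{c}s_\infty$ resp. $p\ge\tfrac{ab}{c}s_{k-1}=kC_k$ (with $\tfrac{ab}{c}s_0=\tfrac{ab}{c}$ when $k\le1$), and on $\mathfrak R_2$, $p\le\tfrac{ab}{c}s_{k-1}=kC_k$ resp. $p\ge\tfrac{ab}{c}s_\infty$.

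It remains to evaluate $s_\infty=H(1^-)=\lim_{x\to1^-}\dfrac{(1-x)F(a+1,b+1;c+1;x)}{F(a,b;c;x)}$ through the three regimes \eqref{c>a+b}, \eqref{c=a+b}, \eqref{c<a+b}. If $c>a+b$, the denominator tends to a positive constant and the numerator to $0$ (splitting off $c>a+b+1$, $c=a+b+1$, and $a+b<c<a+b+1$, according to whether $F(a+1,b+1;c+1;\cdot)$ is bounded near $1$, diverges logarithmically, or diverges like a negative power), so $H(1^-)=0$. If $c=a+b$, the denominator diverges to $+\infty$ while the numerator tends to $\Gamma(a+b+1)/\!\left[\Gamma(a+1)\Gamma(b+1)\right]$, so again $H(1^-)=0$. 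If $a+b-1\le c<a+b$, numerator and denominator are each asymptotic to a constant times $(1-x)^{\,c-a-b}$, and the quotient of the two constants collapses (via $\Gamma(z+1)=z\Gamma(z)$) to $c(a+b-c)/(ab)$. Hence $\tfrac{ab}{c}s_\infty=\max\{0,a+b-c\}$ on $\mathfrak R_1$, while on $\mathfrak R_2$ — where $(c-a)(c-b)\le 0$ already forces $c\le\max\{a,b\}<a+b$ — it equals $a+b-c$. Substituting these values and $\tfrac{ab}{c}s_{k-1}=kC_k$ into the reductions above reproduces precisely the equivalences claimed in (i) and (ii).

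The structural steps — the identity for $(\ln\mathcal F_p)'$, the transfer of monotonicity and of power-series convergence from $H$ via Lemma~\ref{lemma-H}, and the coefficient criterion — are essentially mechanical; the genuinely delicate point, and the one I expect to absorb most of the work, is the exact evaluation of $H(1^-)$, which demands disentangling the three boundary behaviours of $F$ near $x=1$ and, inside $c>a+b$, the three sub-cases for $F(a+1,b+1;c+1;\cdot)$. A minor secondary point is the verification that $\mathfrak R_2$ satisfies all hypotheses of Lemma~\ref{lemma-H}.
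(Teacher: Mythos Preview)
Your plan is correct and mirrors the paper's proof essentially step for step: both reduce everything to the monotonicity of $\{nC_n\}_{n\ge1}$ via Lemma~\ref{lemma-H} and the identity $\tfrac{ab}{c}H(x)=\sum_{n\ge0}[(n+1)C_{n+1}-nC_n]x^n$, then evaluate $\lim_n nC_n=\tfrac{ab}{c}H(1^-)$ through the same three-regime asymptotics \eqref{c>a+b}--\eqref{c<a+b}. One small caution on the ``minor secondary point'': your implication $c\ge ab\Rightarrow(1+a+b-ab)c\ge 2ab$ is false in isolation (it would need $1+a+b-ab\ge 2$); the paper instead splits on $b\le1$ versus $b>1$, using $c\ge a$ in the first case and $c\ge a+b-1$ in the second to get $(1+a+b-ab)c-2ab\ge a(1+a)(1-b)$ or $(1-a)(b-1)(1+a+b)$ respectively.
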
	
\proof
	First, rewrite $\ln\mathcal{F}_p(x)$ using the Maclaurin expansion of $\ln F(a,b;c;x)$:
	\begin{equation}\label{H_p-expr}
		\ln\mathcal{F}_p(x)=p\ln(1-x)+\ln F(a,b;c;x)=\sum_{n=1}^\infty\frac{nC_n-p}{n}x^n.
	\end{equation}
	The absolute monotonicity of $\ln\mathcal{F}_p(x)$ (or its derivatives) depends on the non-negativity (or non-positivity) of the coeffcients $\frac{nC_n-p}{n}$, which is equivalent to the monotonicity of $\{n C_n\}_{n\geq1}$ and the limit of $nC_n$ as $n\to\infty$.
	
	\begin{description}[leftmargin=0.5em]
		\item[Step 1] Compute $\lim\limits_{n\to\infty}nC_n$.\\
		Differentiate $\ln F(a,b;c;x)$ and multiply by $(1-x)$:
		\begin{equation}\label{theorem5-H}
			\frac{ab}{c}\frac{(1-x)F(a+1,b+1;c+1;x)}{F(a,b;c;x)}=\sum_{n=0}^{\infty}\Big[(n+1)C_{n+1}-nC_n\Big]x^{n}.
		\end{equation}
		Taking the limit as $x\to1^-$ and using the symmetry relation \eqref{sym-relation} for the hypergeometric function, we obtain
		\begin{equation}\label{limnC_n}
			\lim_{n\to\infty}nC_n=\sum_{n=0}^\infty\Big[(n+1)C_{n+1}-nC_n\Big]=\lim_{x\to1^-}\frac{ab}{c}\frac{(1-x)F(a+1,b+1;c+1;x)}{F(a,b;c;x)}.
		\end{equation}
		By the asymptotic behavior of the hypergeometric function \eqref{c>a+b}-\eqref{c<a+b}:
		\begin{itemize}[leftmargin=1em]
			\item If $c>a+b$, then $\lim\limits_{x\to1^-}F(a+1,b+1;c+1,x)$ is finite (if $c\geq a+b+1$) and $\lim\limits_{x\to1^-}F(c-a,c-b;c+1,x)$ is also finite (if $a+b<c<a+b+1$), which implies $\lim\limits_{x\to1^-}\left[(1-x)F(a+1,b+1;c+1;x)\right]=0$. In this case, $\lim\limits_{x\to1^-}F(a,b;c;x)$ is finite and non-zero, by \eqref{limnC_n}, leading to $\lim\limits_{n\to\infty}nC_n=0$;
			\item If $c=a+b$, then $\lim\limits_{x\to1^-}\left[(1-x)F(a+1,b+1;c+1;x)\right]$ is finite and non-zero, while $\lim\limits_{x\to1^-}F(a,b;c;x)$ is infinite, which implies $\lim\limits_{n\to\infty}nC_n=0$;
			\item If $c<a+b$, then it follows from \eqref{c>a+b} that $\lim\limits_{x\to1^-}F(c-a,c-b;c+1;x)=\frac{\Gamma(c+1)\Gamma(a+b+1-c)}{\Gamma(a+1)\Gamma(b+1)}$ and $\lim\limits_{x\to1^-}F(c-a,c-b;c;x)=\frac{\Gamma(c)\Gamma(a+b-c)}{\Gamma(a)\Gamma(b)}$, which together with \eqref{limnC_n} gives $\lim\limits_{n\to\infty}nC_n=a+b-c$.
		\end{itemize}
		Collectively, $\lim\limits_{n\to\infty}nC_n=\max\{0,a+b-c\}$.
		\item[Step 2] Monotonicity of $\{nC_n\}_{n\geq1}$.\\
		From \eqref{theorem5-H} and Lemma \ref{lemma-H}, we see that $H(x)=\sum_{n=0}^\infty\left[(n+1)C_{n+1}-nC_n\right]x^n$, so the monotonicity of $\{nC_n\}_{n\geq1}$ is equivalent to the sign of the coefficients of $H'(x)$. 
		\begin{itemize}[leftmargin=1em]
			\item For $(a,b,c)\in\mathfrak{R}_1$: By Lemma \ref{lemma-H}, $-H'(x)$ is absolutely monotonic, so its coefficients are non-negative. This implies $(n+1)C_{n+1}-nC_n\leq0$ for all $n\geq0$, i.e., $\{nC_n\}_{n\geq1}$ is decreasing and thereby $\frac{ab}{c}=1\cdot C_1\geq nC_n\geq\lim\lim\limits_{n\to\infty}nC_n=\max\{0,a+b-c\}$.
			\item For $(a,b,c)\in\mathfrak{R}_2$:  We first claim that $(1+a+b-ab)c\geq2ab$. We may assume that $a\leq b$. It follows  from $(c-a)(c-b)\leq0$ that $a\leq c\leq b$ and thereby $a+b-1\leq c\leq b$, which gives $a\leq1$ and then $1+a+b-ab=1+a+(1-a)b>0$. Hence, $(1+a+b-ab)c-2ab\geq (1+a+b-ab)a-2ab=a(a+1)(1-b)\geq0$ if $b\leq1$ and $(1+a+b-ab)c-2ab\geq(1+a+b-ab)(a+b-1)-2ab=(1-a)(b-1)(1+a+b)\geq0$ if $b>1$. This gives the above claim. By Lemma \ref{lemma-H}, $H'(x)$ is absolutely monotonic and so $\{nC_n\}_{n\geq1}$ is increasing. Consequently, $\frac{ab}{c}=1\cdot C_1=nC_n\leq \lim\limits_{n\to\infty}nC_n=a+b-c$ (since $c<a+b$).
		\end{itemize}
		\item[Step 3] Derive the necessary and sufficient conditions.\\
		For $(a,b,c)\in\mathfrak{R}_1$:
		\begin{itemize}[leftmargin=1.5em]
			\item[$\circ$] $\ln\mathcal{F}_p$ is absolute monotonic if and only if $nC_n-p\geq0$ for all $n\geq1$, equivalently, $p\leq nC_n$ for all $n\geq1$. That is to say, $p\leq \min_{n\geq1}\{nC_n\}=\max\{0,a+b-c\}$;
			\item[$\circ$] $-\ln\mathcal{F}_p$ is absolute monotonic if and only if $nC_n-p\leq0$ for all $n\geq1$, equivalently, $p\geq nC_n$ for all $n\geq1$, if and only if $p\geq\max_{n\geq1}\{nC_n\}=\frac{ab}{c}$;
			\item[$\circ$] For $k\geq1$, $-(\ln\mathcal{F}_p)^{(k)}$ (resp. $(\ln\mathcal{F}_p)^{(k)}$) is absolutely monotonic if and only if the coefficients of this Maclaurin series are non-negative, which is equivalent to $p\geq\max_{n\geq k}\{nC_n\}=kC_k$ (resp. $p\leq\min_{n\geq k}\{nC_n\}=\max\{0,a+b-c\}$), since $\{nC_n\}_{n\geq1}$ is decreasing.
		\end{itemize}
		For $(a,b,c)\in\mathfrak{R}_2$:
		\begin{itemize}[leftmargin=1.5em]
			\item[$\circ$] $\ln\mathcal{F}_p$ is absolute monotonic if and only if $nC_n-p\geq0$ for all $n\geq1$, i.e., $p\leq nC_n$ for all $n\geq1$. That is to say, $p\leq \min_{n\geq1}\{nC_n\}=\frac{ab}{c}$;
			\item[$\circ$] $-\ln\mathcal{F}_p$ is absolute monotonic iff $nC_n-p\leq0$ for all $n\geq1$, i.e., $p\geq nC_n$ for all $n\geq1$, iff $p\geq\max_{n\geq1}\{nC_n\}=a+b-c$.
			\item[$\circ$] For $k\geq1$, $(\ln\mathcal{F}_p)^{(k)}$ (resp. $-(\ln\mathcal{F}_p)^{(k)}$ )is absolutely monotonic if and only if the coefficients of this Maclaurin series are non-negative. Direct computation shows this is equivalent to $p\leq\min_{n\geq k}\{nC_n\}=kC_k$ (resp. $p\leq\max_{n\geq k}\{nC_n\}=a+b-c$), since $\{nC_n\}_{n\geq1}$ is increasing.
		\end{itemize}
This completes the proof. \qed
	\end{description}

\begin{remark}
	Theorem \ref{theorem-lnF} offers a streamlined and novel proof for establishing the necessary and sufficient conditions that ensure $\pm\ln\mathcal{F}_p(x)$ and their higher-order derivatives are absolutely monotonic within specific parameter regions. A critical advancement here lies in the refinement of parameter domains compared to existing literature \cite{WZ-BIMS-2024}. Specifically, the parameter region $\mathfrak{R}_2$ in this work aligns with the region $\mathcal{R}_2$ defined in \cite{WZ-BIMS-2024}, while $\mathfrak{R}_1$ represents a strict improvement over the region $\mathcal{R}_1$ from the same reference.

    In \cite{WZ-BIMS-2024}, the region $\mathcal{R}_1$ imposes the restrictive constraint $c\geq a+b$; in contrast, we relax this condition to $c \geq a+b-1$ for $\mathfrak{R}_1$. This relaxation not only expands the scope of valid parameters but also leads to a new upper bound for $p$ that guarantees the absolute monotonicity of $\ln\mathcal{F}_p(x)$—a result that was unattainable under the stricter constraint of $\mathcal{R}_1$. Notably, the technical efficiency of our approach stands out: we establish the monotonicity of the sequence $\{nC_n\}_{n\geq1}$ and compute its limit as $n\to\infty$ with minimal exposition, which stands in sharp contrast to \cite{WZ-BIMS-2024}.
\end{remark}

\section{Inequalities involving the Gaussian hypergeometric function}
In this section, we utilize the absolute monotonicity of three function families  $\mathcal{F}_p$, $\mathcal{G}_p$ and $\ln\mathcal{F}_p$ (established in Section 3) to derive several new inequalities for the Gaussian hypergeometric function $F(a,b;c;x)$. 
	
\subsection{A Rational Approximation}By using the absolute monotonicity results of $\mathcal{F}_p(x)$ from Theorems \ref{theorem-Fp} and \ref{theorem-Fp-2}, we first establish a rational approximation for $F(a,b;c;x)$.
	
\begin{proposition}\label{prop-rational}
Let $u_n=u_n(p)$ denote the Maclaurin coefficients of $\mathcal{F}_p$ (i.e. $\mathcal{F}_p(x)=\sum_{n=0}^{\infty}u_n(p)x^n$). Let $p_*$ (smaller) and $p^*$ (larger) be two positive roots of $\tau(p)$ defined in Lemma \ref{lemma-tau}. Then the following statements hold:
\begin{itemize}[leftmargin=1em]
	\item For $c\geq a+b$:
	\medskip
	\begin{itemize}
		\item[$\circ$] If $\frac{ab}{c}\leq p\leq1$,  the double inequality
		\begin{equation}\label{R-ineq-1}
			\frac{\sum_{j=0}^nu_jx^j-\left(\sum_{k=0}^nu_k\right)x^{n+1}}{(1-x)^p}<F(a,b;c;x)<\frac{\sum_{j=0}^{n+1}u_jx^j}{(1-x)^p}
		\end{equation}holds for all $x\in(0,1)$ with $n\geq0$;
	\item[$\circ$] If $p_*\leq p\leq1$,  the double inequality above (Eq. \eqref{R-ineq-1}) holds for all $x\in(0,1)$ with $n\geq1$;
	\item[$\circ$] If $p^*\leq p\leq 2$, the double inequality
	\begin{equation}\label{R-ineq-2}
	\frac{\sum_{j=0}^{n+1}u_jx^j}{(1-x)^p}<F(a,b;c;x)<\frac{\sum_{j=0}^nu_jx^j-\left(\sum_{j=0}^nu_j\right)x^{n+1}}{(1-x)^p}
	\end{equation}holds for all $x\in(0,1)$ with $n\geq1$.
	\end{itemize}
	\item For $\max\{a,b\}<c<a+b$:
	\medskip
	\begin{itemize}[itemsep=0.45em]
		\item[$\circ$]  If $\frac{ab}{c}\leq p\leq a+b+1-c$, the inequality \eqref{R-ineq-1} holds for all $x\in(0,1)$ with $n\geq0$;
		\item[$\circ$]  If $p_*\leq p\leq a+b+1-c$, the  inequality \eqref{R-ineq-1} holds for all $x\in(0,1)$ with $n\geq1$;
		\item[$\circ$]  If $p^*\leq p\leq a+b+2-c$, the inequality \eqref{R-ineq-2} holds for all $x\in(0,1)$ with $n\geq1$.
	\end{itemize}
\end{itemize}
\end{proposition}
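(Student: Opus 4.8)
The plan is to reduce both double inequalities \eqref{R-ineq-1} and \eqref{R-ineq-2} to elementary estimates on the tails of the power series $\mathcal{F}_p(x)=\sum_{n\ge 0}u_nx^n$. Since $(1-x)^p>0$ on $(0,1)$ and $F(a,b;c;x)=\mathcal{F}_p(x)/(1-x)^p$, setting $S_n=\sum_{k=0}^{n}u_k$ and $R_m(x)=\mathcal{F}_p(x)-\sum_{j=0}^{m-1}u_jx^j=\sum_{j\ge m}u_jx^j$, one checks that \eqref{R-ineq-1} is equivalent to the pair $R_{n+2}(x)<0$ and $R_{n+1}(x)>-S_nx^{n+1}$, and \eqref{R-ineq-2} to the reverse pair $R_{n+2}(x)>0$ and $R_{n+1}(x)<-S_nx^{n+1}$. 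Thus the proposition reduces to two ingredients: the signs of $u_j$ for $j$ large, supplied by Theorems \ref{theorem-Fp} and \ref{theorem-Fp-2}, and the boundary value $\mathcal{F}_p(1^-)$.

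First I would record that $\mathcal{F}_p(1^-)=0$ in every parameter range occurring in the statement. As $\frac{ab}{c},p_*,p^*>0$ by Lemma \ref{lemma-tau}, one has $p>0$ throughout; so if $c>a+b$ then $(1-x)^p\to0$ while $F(a,b;c;x)$ tends to the finite limit in \eqref{c>a+b}; if $c=a+b$ then \eqref{c=a+b} gives $F(a,b;c;x)=O(-\ln(1-x))$, and $(1-x)^p(-\ln(1-x))\to0$; and if $\max\{a,b\}<c<a+b$, the symmetry relation \eqref{sym-relation} rewrites $\mathcal{F}_p(x)=(1-x)^{\bar p}F(\bar a,\bar b;c;x)$ with $\bar a=c-a$, $\bar b=c-b$, $\bar p=p+c-a-b$, where $\bar a+\bar b<c$ and $\bar p>0$ in each of the three sub-cases (exactly as in the translation used in Theorem \ref{theorem-Fp-2}), reducing to the first case.

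Next I would read off the signs of the Maclaurin coefficients from Theorems \ref{theorem-Fp} and \ref{theorem-Fp-2}. Since $\mathcal{F}_p$ is real-analytic near $0$, absolute monotonicity of $-\mathcal{F}'_p$ (resp. $-\mathcal{F}''_p$, resp. $\mathcal{F}''_p$) on $(0,1)$ forces all Taylor coefficients of $-\mathcal{F}'_p$ (resp. $-\mathcal{F}''_p$, $\mathcal{F}''_p$) at $0$ to be nonnegative; this yields $u_n\le0$ for $n\ge1$ when $\frac{ab}{c}\le p\le 1$ (resp. $\le a+b+1-c$), $u_n\le0$ for $n\ge2$ when $p_*\le p\le1$ (resp. $\le a+b+1-c$), and $u_n\ge0$ for $n\ge2$ when $p^*\le p\le2$ (resp. $\le a+b+2-c$). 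Also $\mathcal{F}_p$ is not a polynomial (otherwise $F(a,b;c;x)$ would be rational, impossible for $a,b>0$), so in every case infinitely many of the eventually one-signed $u_n$ are strictly of that sign.

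Finally, the tail estimate comes from the monotonicity of a single auxiliary power series. If $u_j\le0$ for all $j\ge m$ with infinitely many of them negative, then $R_m(x)=\sum_{j\ge m}u_jx^j<0$ on $(0,1)$, and $\rho_m(x):=R_m(x)/x^m=\sum_{i\ge0}u_{m+i}x^i$ is strictly decreasing on $(0,1)$ with $\rho_m(1^-)=\mathcal{F}_p(1^-)-S_{m-1}=-S_{m-1}$, whence $R_m(x)>-S_{m-1}x^m$ on $(0,1)$; the reversed statements hold when $u_j\ge0$ for $j\ge m$. Taking $m=n+1$ and $m=n+2$ and comparing with the equivalences above gives \eqref{R-ineq-1} whenever $u_j\le0$ for all $j\ge n+1$, and \eqref{R-ineq-2} whenever $u_j\ge0$ for all $j\ge n+1$; the hypotheses $n\ge0$ or $n\ge1$ in each bullet are precisely the requirement that $n+1$ not drop below the index from which the coefficients are known to be one-signed. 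There is no serious obstacle here: the only genuine analysis is the boundary evaluation $\mathcal{F}_p(1^-)=0$, and the only delicate point is strictness of the inequalities, which rests on $\mathcal{F}_p$ not being a polynomial.
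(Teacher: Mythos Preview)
Your approach is essentially the paper's: the author also forms $\rho_{n+1}(x)=x^{-(n+1)}\bigl(\mathcal{F}_p(x)-\sum_{j=0}^{n}u_jx^j\bigr)$, notes it is decreasing with values in $[-S_n,u_{n+1}]$ (using $\mathcal{F}_p(1^-)=0$ implicitly, which you make explicit), and reads off \eqref{R-ineq-1}. One small imprecision: your justification that $\mathcal{F}_p$ is not a polynomial because ``$F$ would be rational'' only applies when $p\in\mathbb{Z}$; a cleaner route is to compare the known asymptotic of $F$ at $x=1^-$ (finite, logarithmic, or a non-integer power) with that of $P(x)(1-x)^{-p}$, or in the case $\max\{a,b\}<c<a+b$ to pass to $(\bar a,\bar b,c,\bar p)$ as you already do and use that $F(\bar a,\bar b;c;x)$ has all Maclaurin coefficients positive.
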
	

\begin{proof}
We only prove inequality  \eqref{R-ineq-1}; the other inequalities can be derived similarly.

If $\frac{ab}{c}\leq p\leq1$, Theorem \ref{theorem-Fp}(i) shows that $-\mathcal{F}'_p$ is absolutely monotonic. This implies $u_n\leq0$ for all $n\geq1$, so the function
\begin{equation*}
\frac{1}{x^{n+1}}\left[\mathcal{F}_p(x)-\sum_{j=0}^nu_jx^j\right]=\sum_{j=0}^{\infty}u_{j+n+1}x^j
\end{equation*}is decreasing on $(0,1)$ for all $n\geq0$ and takes values in the interval $\left[-\sum_{j=0}^nu_j,u_{n+1}\right]$. In other words, the double inequality
\begin{equation*}
	-\sum_{j=0}^nu_j<\frac{1}{x^{n+1}}\left[\mathcal{F}_p(x)-\sum_{j=0}^nu_jx^j\right]<u_{n+1}
\end{equation*}holds for all $x\in(0,1)$, which is equivalent to inequality \eqref{R-ineq-1}.
\end{proof}	
	
\subsection{A Logarithmic Approximation}	
Next,  we apply the absolutely monotonicity of $\mathcal{G}_p(x)$ (from Theorems \ref{theorem-Gp} and \ref{theorem-Gp-2}) to establish a logarithmic approximation for $F(a,b;c;x)$.	
	
\begin{proposition}
	Let $v_n=v_n(p)$ denote the Maclaurin coefficients of $\mathcal{G}_p$ (i.e. $\mathcal{G}_p(x)=\sum_{n=0}^{\infty}v_n(p)x^n$). Then the following statements hold:
	\begin{itemize}[leftmargin=1em]
		\item For $c\leq a+b$:
		\medskip
		\begin{itemize}
			\item[$\circ$] If $p\leq \frac{ab}{c}$,  the following inequality
			\begin{equation}\label{log-ineq-1}
			F(a,b;c;x)>\ln\left[\frac{\sum_{k=0}^{n}v_jx^j}{(1-x)^p}\right]
			\end{equation} holds for all $x\in(0,1)$ with $n\geq0$;
			\item[$\circ$] In particular, for $c=a+b$ with $a+b\geq2ab(a+b+1)$, the reverse of inequality \eqref{log-ineq-1} holds for all $x\in(0,1)$ and $n\geq1$ if $\frac{ab(2a+2b+1)}{a+b}\leq p\leq1$.
		\end{itemize}
		\item For $c\geq a+b+ab$:
		\medskip
		\begin{itemize}
			\item[$\circ$]  If $\frac{ab}{c}\leq p\leq1$, the double inequality 
			\begin{equation}\label{log-ineq-2}
				\ln\left[\frac{\sum_{j=0}^nv_jx^j-\left(\sum_{j=0}^nv_j\right)x^{n+1}}{(1-x)^p}\right]<F(a,b;c;x)<\ln\left[\frac{\sum_{j=0}^{n+1}v_jx^j}{(1-x)^p}\right]
			\end{equation}holds for all $x\in(0,1)$ and $n\geq1$.
		\end{itemize}
	\end{itemize}
\end{proposition}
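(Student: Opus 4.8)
The plan is to reduce every assertion to the sign pattern of the Maclaurin coefficients $v_n$ of $\mathcal{G}_p$, by exploiting the identity
\begin{equation*}
F(a,b;c;x)=\ln\!\left[\frac{\mathcal{G}_p(x)}{(1-x)^p}\right]=\ln\!\left[\frac{\sum_{n=0}^{\infty}v_nx^n}{(1-x)^p}\right],
\end{equation*}
which is immediate from $\mathcal{G}_p(x)=(1-x)^p\exp F(a,b;c;x)$. Since $\ln$ and division by $(1-x)^p>0$ are order-preserving on $(0,1)$, each claimed inequality becomes a comparison between the full series $\sum_{n\ge0}v_nx^n$ and a suitable partial sum, with the sign information on $\{v_n\}$ supplied by Theorems \ref{theorem-Gp} and \ref{theorem-Gp-2}.

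For $c\le a+b$ with $p\le ab/c$, Theorem \ref{theorem-Gp}(i) asserts that $\mathcal{G}_p$ is AM on $(0,1)$, hence $v_n\ge0$ for all $n$; as $\mathcal{G}_p$ is not a polynomial, the tail $\sum_{j>n}v_jx^j$ is strictly positive on $(0,1)$, so $\sum_{n\ge0}v_nx^n>\sum_{j=0}^{n}v_jx^j>0$, and applying $\ln$ gives \eqref{log-ineq-1}. In the zero-balanced case, under the hypotheses of Theorem \ref{theorem-Gp}(ii) the function $-\mathcal{G}'_p$ is AM, so $v_0=e$ while $v_n\le0$ for every $n\ge1$; then for $n\ge1$ the truncation satisfies $\sum_{j=0}^{n}v_jx^j>\sum_{n\ge0}v_nx^n=\mathcal{G}_p(x)>0$ (the omitted tail being strictly negative), and taking $\ln$ reverses \eqref{log-ineq-1}.

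For $c\ge a+b+ab$ with $ab/c\le p\le1$, Theorem \ref{theorem-Gp-2} again gives $-\mathcal{G}'_p$ AM, so $v_n\le0$ for $n\ge1$, and here I would reuse the remainder technique of Proposition \ref{prop-rational}. The function $R_n(x)=x^{-(n+1)}\bigl[\mathcal{G}_p(x)-\sum_{j=0}^{n}v_jx^j\bigr]=\sum_{j\ge0}v_{j+n+1}x^j$ has all coefficients $\le0$, hence is (strictly) decreasing on $(0,1)$, with $R_n(0^{+})=v_{n+1}$ and $R_n(1^{-})=\sum_{j\ge0}v_{j+n+1}=-S_n$, where $S_n:=\sum_{j=0}^{n}v_j$ and I use $\sum_{n\ge0}v_n=\mathcal{G}_p(1^{-})=0$ (valid since $c>a+b$ makes $F(a,b;c;1^{-})$ finite and $p>0$, so $(1-x)^p\to0$). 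Consequently $-S_nx^{n+1}<\mathcal{G}_p(x)-\sum_{j=0}^{n}v_jx^j<v_{n+1}x^{n+1}$ on $(0,1)$, i.e.
\begin{equation*}
\sum_{j=0}^{n}v_jx^j-S_nx^{n+1}<\mathcal{G}_p(x)<\sum_{j=0}^{n+1}v_jx^j,
\end{equation*}
and dividing by $(1-x)^p$ and taking $\ln$ yields \eqref{log-ineq-2}.

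The only point requiring care beyond bookkeeping is that the arguments of all the logarithms stay strictly positive, so that the bounds are meaningful. The upper truncations are unproblematic since they dominate $\mathcal{G}_p(x)>0$. For the lower bound in \eqref{log-ineq-2} I would use the telescoping identity $\sum_{j=0}^{n}v_jx^j-S_nx^{n+1}=(1-x)\sum_{k=0}^{n}S_kx^k$ together with the observation that, in this regime, each partial sum $S_k=-\sum_{j>k}v_j$ is strictly positive (because $v_j\le0$ for $j\ge1$ and $\mathcal{G}_p$ is not a polynomial), which makes the expression positive on $(0,1)$. A related technical step, the identification $\sum_{n\ge0}v_n=\mathcal{G}_p(1^{-})$ via Abel's theorem, is justified exactly as in the proof of Proposition \ref{prop-rational} from the monotonicity and boundedness of the partial sums $\{S_n\}$.
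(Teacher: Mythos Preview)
Your proposal is correct and follows essentially the same approach as the paper: both reduce the inequalities to sign information on the Maclaurin coefficients $v_n$ supplied by Theorems~\ref{theorem-Gp} and~\ref{theorem-Gp-2}, then compare $\mathcal{G}_p(x)$ with its partial sums (for the double inequality, via the same remainder function used in Proposition~\ref{prop-rational}). Your write-up is in fact more thorough than the paper's terse proof, which omits the zero-balanced reversed case entirely and does not discuss positivity of the logarithms' arguments; your telescoping identity $\sum_{j=0}^{n}v_jx^j-S_nx^{n+1}=(1-x)\sum_{k=0}^{n}S_kx^k$ is a nice way to handle the latter point.
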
	
\begin{proof}
For $c\leq a+b$:  If $p\leq\frac{ab}{c}$, Theorem \ref{theorem-Gp}(i) implies $\mathcal{G}_p(x)$ is absolutely monotonic, so $v_n \geq 0$ for all $n \geq 0$. Thus, 
\begin{equation*}
\mathcal{G}_p(x)-\sum_{j=0}^nv_kx^j=\sum_{j=n+1}^{\infty}v_jx^j>0
\end{equation*}for all $x\in(0,1)$ and $n\geq0$. Taking the natural logarithm of both sides yields inequality\eqref{log-ineq-1}. 

For $c\geq a+b+ab$: The function $\exp(F(a,b;c;x))$ is finite as $x\to1^-$, so $\mathcal{G}_p(1^-)=0$ when $\frac{ab}{c}\leq p\leq1$. Following the same reasoning as in the proof of Proposition \ref{prop-rational}, we obtain inequality \eqref{log-ineq-2}.
\end{proof}	
\subsection{A Exponential Approximation} 
Using the absolute monotonicity of $\ln \mathcal{F}_p(x)$ (from Theorem \ref{theorem-lnF}), we derive an exponential approximation for $F(a, b; c; x)$.	
	\begin{proposition}\label{prop-exp}
		Let $w_n=w_n(p)$ denote the Maclaurin coefficients of $\ln\mathcal{F}_p$ (i.e. $\ln\mathcal{F}_p(x)=\sum_{n=1}^{\infty}w_n(p)x^n$) and let $C_n$ denote the Maclaurin coefficients of $\ln F(a,b;c;x)$ (as defined in Theorem \ref{theorem-lnF}). Then the following statements hold:
		\begin{itemize}[leftmargin=1em]
			\item For $(a,b,c)\in\mathfrak{R}_1$:
			\medskip
			\begin{itemize}
				\item[$\circ$] If $p\leq\max\{0,a+b-c\}$ and $q\geq\frac{ab}{c}$,  the double inequality
				\begin{equation}\label{exp-ineq-1}
					\frac{\exp\left(\sum_{j=1}^{n}w_jx^j\right)}{(1-x)^p}<F(a,b;c;x)<\frac{\exp\left(\sum_{j=1}^{n}w_jx^j\right)}{(1-x)^q}
				\end{equation}holds for all $x\in(0,1)$ and $n\geq1$;
				\item[$\circ$] In particular, if $p\leq\max\{0,a+b-c\}$ and $q\geq kC_k$ for some $k\geq1$, the inequality \eqref{exp-ineq-1} holds for all $x\in(0,1)$ and $n\geq k$.
			\end{itemize}
			\item For $(a,b,c)\in\mathfrak{R}_2$:
			\medskip
			\begin{itemize}
				\item[$\circ$]  If $p\leq\frac{ab}{c}$ and $p\geq a+b-c$, the inequality \eqref{exp-ineq-1} holds for all $x\in(0,1)$ and $n\geq 1$;
				\item[$\circ$] In particular, if $p\leq kC_k$ and $q\geq a+b-c$ for some $k\geq1$, the inequality \eqref{exp-ineq-1} holds for all $x\in(0,1)$ and $n\geq k$.
			\end{itemize}
		\end{itemize}
	\end{proposition}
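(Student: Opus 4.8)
The plan is to mirror the argument used for Proposition \ref{prop-rational}, replacing the function $\mathcal{F}_p$ by the quantity $\ln F(a,b;c;x)$ and using the absolute monotonicity results from Theorem \ref{theorem-lnF}. First I would write $\ln F(a,b;c;x)=\sum_{n=1}^{\infty}C_nx^n$ and observe that, by \eqref{H_p-expr}, $w_n=C_n-p/n$ for $n\geq1$ together with $w_0=0$; more to the point, the partial-sum comparison I need only involves the coefficients $C_n$ directly, since I will bracket $\ln F(a,b;c;x)-\sum_{j=1}^n w_j x^j$. The cleaner route is to note that $\ln\mathcal{F}_p(x)=p\ln(1-x)+\ln F(a,b;c;x)$, so that exponentiating the inequality $\sum_{j=1}^n w_j x^j < \ln\mathcal{F}_p(x)$ (or its reverse) and dividing by $(1-x)^p$ yields exactly the left (or right) half of \eqref{exp-ineq-1}; the two halves come from using the parameter $p$ on the left and a second parameter $q$ on the right.

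The key steps, in order: (1) For $(a,b,c)\in\mathfrak{R}_1$ and $p\leq\max\{0,a+b-c\}$, Theorem \ref{theorem-lnF}(i) gives that $\ln\mathcal{F}_p$ is absolutely monotonic, hence its Maclaurin coefficients $w_n(p)$ satisfy $w_n(p)\geq0$ for all $n\geq1$; therefore $\ln\mathcal{F}_p(x)-\sum_{j=1}^n w_j(p)x^j=\sum_{j>n}w_j(p)x^j\geq0$ for $x\in(0,1)$, which after exponentiation and division by $(1-x)^p$ gives $F(a,b;c;x)>\exp\!\big(\sum_{j=1}^n w_j(p)x^j\big)/(1-x)^p$. (2) For the upper bound, take $q\geq ab/c$; Theorem \ref{theorem-lnF}(i) gives $-\ln\mathcal{F}_q$ absolutely monotonic, so $w_n(q)\leq0$ for $n\geq1$, whence $\ln\mathcal{F}_q(x)\leq\sum_{j=1}^n w_j(q)x^j$ and thus $F(a,b;c;x)<\exp\!\big(\sum_{j=1}^n w_j(q)x^j\big)/(1-x)^q$. (3) The displayed inequality \eqref{exp-ineq-1} as written uses the \emph{same} coefficients $w_j$ in both numerators; this is consistent because $w_j(p)=C_j-p/j$ and the inner sum $\sum_{j=1}^n w_j x^j$ is understood with the corresponding parameter in each bound — I would state this explicitly to avoid ambiguity, or equivalently rewrite both sides with $\ln F(a,b;c;x)$ split off so that the common quantity is literally $\sum_{j=1}^n C_j x^j$ minus the respective logarithmic term. (4) The refinement to $n\geq k$ follows identically from the higher-derivative statements: $(\ln\mathcal{F}_p)^{(k)}$ absolutely monotonic forces $w_n(p)\geq0$ for all $n\geq k$ (the coefficients beyond index $k$ are, up to positive factorial factors, the Maclaurin coefficients of the $k$-th derivative), and $-(\ln\mathcal{F}_q)^{(k)}$ absolutely monotonic with $q\geq kC_k$ forces $w_n(q)\leq0$ for $n\geq k$; summing the tail from $n+1$ onward for $n\geq k$ then gives the same bracketing. (5) The case $(a,b,c)\in\mathfrak{R}_2$ is handled verbatim, now invoking Theorem \ref{theorem-lnF}(ii): $p\leq ab/c$ gives $\ln\mathcal{F}_p$ AM (lower bound), $q\geq a+b-c$ gives $-\ln\mathcal{F}_q$ AM (upper bound), and the higher-derivative versions use $p\leq kC_k$ and $q\geq a+b-c$.

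The main obstacle is essentially bookkeeping rather than analysis: making sure the sign of each tail $\sum_{j>n}w_j(\cdot)x^j$ is correctly pinned down from the precise AM statement being invoked, and being careful that the inequalities are strict on the open interval $(0,1)$ — strictness holds because $F(a,b;c;x)$ is nonconstant, so the tail is not identically zero and is positive (resp. negative) for $x\in(0,1)$. A secondary point to get right is the exponentiation step: since $\exp$ is strictly increasing, strict inequalities are preserved, and dividing by the positive quantity $(1-x)^p$ is harmless; one should also note that $F(a,b;c;x)=\mathcal{F}_p(x)/(1-x)^p=\exp(\ln\mathcal{F}_p(x))/(1-x)^p$, which is where the $1/(1-x)^p$ and $1/(1-x)^q$ factors in \eqref{exp-ineq-1} originate. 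No genuinely new estimate is required; the proof is a direct corollary of Theorem \ref{theorem-lnF} via the monotone-tail argument, exactly as Proposition \ref{prop-rational} was a corollary of Theorem \ref{theorem-Fp}.
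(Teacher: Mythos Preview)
Your proposal is correct and follows essentially the same route as the paper's own proof: use Theorem \ref{theorem-lnF} to determine the sign of the tail $\sum_{j>n}w_j(\cdot)x^j$, then exponentiate and divide by the appropriate power of $(1-x)$. The paper treats only the $\mathfrak{R}_1$ case explicitly and dismisses the rest with ``follow similarly''; you actually spell out the higher-derivative refinement for $n\geq k$ and the $\mathfrak{R}_2$ case, and you correctly flag the notational ambiguity (the $w_j$ in the two sides of \eqref{exp-ineq-1} carry different parameters $p$ and $q$), which the paper glosses over.
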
	
	
	\begin{proof}
	For $(a,b,c)\in\mathfrak{R}_1$: 
	\begin{itemize}[leftmargin=2.5em]
		\item[$\circ$]If $p\leq\max\{0,a+b-c\}$,
		Theorem \ref{theorem-lnF} implies $\ln \mathcal{F}_p(x)$ is absolutely monotonic, so $	\ln\mathcal{F}_p(x)-\sum_{j=1}^{n}w_jx^j=\sum_{j=n+1}^{\infty}w_jx^j>0$ for all $x\in(0,1)$ and $n\geq1$. Exponentiating both sides gives the left-hand inequality of \eqref{exp-ineq-1}.
		\item[$\circ$] If $q\geq\frac{ab}{c}$, Theorem \ref{theorem-lnF} implies $-\ln \mathcal{F}_q(x)$ is absolutely monotonic, so $\ln\mathcal{F}_q(x)-\sum_{j=1}^{n}w_jx^j=\sum_{j=n+1}^{\infty}w_jx^j<0$ for all \(x \in (0, 1)\) and \(n \geq 1\). Exponentiating both sides gives the right-hand inequality of \eqref{exp-ineq-1}. 
	\end{itemize}
The remaining cases for $(a, b, c) \in \mathfrak{R}_2$ follow similarly from Theorem \ref{theorem-lnF}.
	\end{proof}
	\subsection{Inequalities for $F(a,b;c;r^p)/F(a,b;c;r^{p/q})$}
	We use Theorem \ref{theorem-lnF} to derive sharp upper and lower bounds for $F(a,b;c;r^p)/F(a,b;c;r^{p/q})$, where $p,q\in(1,\infty)$. This result improves the inequalities \cite{WZR-IJPAM-2023} and extends the applicability of $p$ and $q$ (unlike \cite{WZ-BIMS-2024}, which restricts $p$ and $p/q$ to integers).
	
	\textsl{Note}: To align with notation in \cite{WZ-BIMS-2024,WZR-IJPAM-2023}, $p$ and $q$ here denote real numbers greater than $1$ (not the parameters of $\ln \mathcal{F}_p(x)$ in previous sections).
	
	\begin{proposition}
	Let $p,q\in(1,\infty)$ and  define $\mathcal{Q}_{s,n}$ on $(0,1)$ by
	\begin{equation*}
		\mathcal{Q}_{s,n}(x)=\frac{1}{x^{1/q}-x}\ln\left[\frac{(1-x)^sF(a,b;c;x)}{(1-x^{1/q})^sF(a,b;c;x^{1/q})}\right]-\sum_{j=1}^{n}w_j(s)\frac{x^j-x^{j/q}}{x^{1/q}-x}
	\end{equation*}for $n\geq1$, where $w_j(s)$ denote the Maclaurin coefficients of $\ln\mathcal{F}_s$ (i.e. $\ln\mathcal{F}_s(x)=\sum_{j=1}^{\infty}w_j(s)x^j$).
	Then the following statements hold:
	\begin{itemize}[leftmargin=1em]
		\item[$\circ$] For $(a,b,c)\in\mathfrak{R}_1$: $\mathcal{Q}_{s,n}(x)<0$ for all $x\in(0,1)$ if $s\leq\max\{0,a+b-c\}$ and $\mathcal{Q}_{s,n}(x)>0$ for all $x\in(0,1)$ if $s\geq\frac{ab}{c}$. Consequently, the double inequality
	     \begin{equation}\label{ineq-prop6-1}
	     \begin{split}
	     		&\left(\frac{1-r^{p/q}}{1-r^p}\right)^{\frac{ab}{c}}\exp\left[\frac{ab(c-a)(c-b)(r^{2p/q}-r^{2p})}{2c^2(c+1)}\right]<\frac{F(a,b;c;r^p)}{F(a,b;c;r^{p/q})}\\
	     	&\quad <\left(\frac{1-r^{p/q}}{1-r^p}\right)^{s_0}\exp\left[\left(s_0-\frac{ab}{c}\right)(r^{p/q}-r^p)\right]
	     \end{split}
	     \end{equation}
	 holds for all $r\in(0,1)$, where $s_0=\max\{0,a+b-c\}$.
		\item[$\circ$] For $(a,b,c)\in\mathfrak{R}_2$: $\mathcal{Q}_{s,n}(x)<0$ for all $x\in(0,1)$ if $s\leq\frac{ab}{c}$ and $\mathcal{Q}_{s,n}(x)>0$ for all $x\in(0,1)$ if $s\geq a+b-c$. Consequently, the double inequality
		\begin{equation}\label{ineq-prop6-2}
			\begin{split}
				&\left(\frac{1-r^{p/q}}{1-r^p}\right)^{a+b-c}\exp\left[\frac{(a-c)(c-b)(r^{p/q}-r^p)}{c}\right]<\frac{F(a,b;c;r^p)}{F(a,b;c;r^{p/q})} \\
				&\quad <\left(\frac{1-r^{p/q}}{1-r^p}\right)^{\frac{ab}{c}}\exp\left[-\frac{ab(a-c)(c-b)(r^{2p/q}-r^{2p})}{2c^2(c+1)}\right]
			\end{split}
		\end{equation}holds for all $r\in(0,1)$
	\end{itemize}
	\end{proposition}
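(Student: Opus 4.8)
The plan is to establish the sign of $\mathcal{Q}_{s,n}(x)$ first, and then specialize to $n=1$ to extract the explicit double inequalities \eqref{ineq-prop6-1}--\eqref{ineq-prop6-2}. I would begin from the Maclaurin expansion $\ln\mathcal{F}_s(x)=\sum_{j\geq1}w_j(s)x^j$ and notice that, for a fixed $s$,
\begin{equation*}
\ln\left[\frac{(1-x)^sF(a,b;c;x)}{(1-x^{1/q})^sF(a,b;c;x^{1/q})}\right]=\sum_{j=1}^{\infty}w_j(s)\bigl(x^j-x^{j/q}\bigr),
\end{equation*}
so that, after dividing by $x^{1/q}-x>0$ (valid since $q>1$ and $x\in(0,1)$) and subtracting the partial sum, one gets $\mathcal{Q}_{s,n}(x)=\sum_{j=n+1}^{\infty}w_j(s)\,\dfrac{x^j-x^{j/q}}{x^{1/q}-x}$. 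For $j\geq2$ and $x\in(0,1)$ the quantity $x^j-x^{j/q}$ has the same sign as $x-x^{1/q}$ (both are negative), hence each ratio $\dfrac{x^j-x^{j/q}}{x^{1/q}-x}$ is \emph{positive}; therefore $\mathcal{Q}_{s,n}(x)$ inherits the sign of the tail coefficients $w_j(s)$. Invoking Theorem~\ref{theorem-lnF}: on $\mathfrak{R}_1$, if $s\leq\max\{0,a+b-c\}$ then $\ln\mathcal{F}_s$ is AM so all $w_j(s)\geq0$ and $\mathcal{Q}_{s,n}(x)<0$ (the inequality is strict because the series is not identically zero, as $w_2(s)$ can be checked nonzero), whereas if $s\geq\frac{ab}{c}$ then $-\ln\mathcal{F}_s$ is AM, all $w_j(s)\leq0$, giving $\mathcal{Q}_{s,n}(x)>0$; the $\mathfrak{R}_2$ case is identical with the thresholds $\frac{ab}{c}$ and $a+b-c$ swapped, again by Theorem~\ref{theorem-lnF}.

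Next I would convert the sign statements into the displayed double inequalities by taking $n=1$ and substituting $x=r^p$ with exponent $1/q$ producing $x^{1/q}=r^{p/q}$. With $n=1$ the subtracted term is $w_1(s)\dfrac{x-x^{1/q}}{x^{1/q}-x}=-w_1(s)$, and $w_1(s)=C_1-s=\frac{ab}{c}-s$; multiplying $\mathcal{Q}_{s,1}<0$ (resp. $>0$) through by $x^{1/q}-x=r^{p/q}-r^p>0$ and exponentiating yields a bound of the shape $\left(\frac{1-r^{p/q}}{1-r^p}\right)^{s}\exp\bigl[(s-\tfrac{ab}{c})(r^{p/q}-r^p)\bigr]$ on one side. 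For the sharper ends of \eqref{ineq-prop6-1}--\eqref{ineq-prop6-2} one needs the extremal admissible $s$: on the lower side of \eqref{ineq-prop6-1} take $s=\frac{ab}{c}$, which makes the exponential factor collapse to $1$ except that one must retain the \emph{next} order term, i.e. keep $n=2$ rather than $n=1$ so that $w_2\bigl(\tfrac{ab}{c}\bigr)$ survives; computing $w_2(s)=C_2-\tfrac{s}{2}$ and evaluating $C_2$ from the coefficient relation \eqref{theorem5-H} gives $w_2\bigl(\tfrac{ab}{c}\bigr)=\frac{ab(c-a)(c-b)}{2c^2(c+1)}$, which is exactly the constant appearing in the exponent. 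The upper side of \eqref{ineq-prop6-1} uses $s=s_0=\max\{0,a+b-c\}$ at level $n=1$, reproducing the stated factor; the two bounds of \eqref{ineq-prop6-2} are obtained symmetrically, with $s=a+b-c$ (level $n=2$, using $c-a-b<0$ so $(a-c)(c-b)=-(c-a)(c-b)$, which flips the sign in front of the $C_2$-term) on the lower side and $s=\frac{ab}{c}$ (level $n=1$) on the upper side.

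The main obstacle I anticipate is bookkeeping the \emph{level of truncation} consistently with the extremal parameter: when $s$ is taken to be one of the threshold values, the leading coefficient $w_1(s)=\frac{ab}{c}-s$ may vanish (this is precisely what happens for $s=\frac{ab}{c}$), so the naive $n=1$ inequality degenerates to a trivial statement and one must instead use $\mathcal{Q}_{s,2}$ and show $w_2(s)\neq0$ has the correct sign — this requires an explicit, correct evaluation of $C_1=\frac{ab}{c}$ and $C_2$ from \eqref{theorem5-H} (equivalently from $\ln F=\int \frac{(\ln F)'}{1}$), and care that the tail $\sum_{j\geq3}$ still has the sign guaranteed by Theorem~\ref{theorem-lnF} so that it only strengthens, never weakens, the two-term estimate. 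A secondary but routine check is the strictness of all inequalities, which follows because on $(0,1)$ the omitted tail is a sum of strictly positive multiples of coefficients that are not all zero; and one should record that $\frac{x^j-x^{j/q}}{x^{1/q}-x}>0$ fails only in the trivial limits $x\to0^+,1^-$, so strictness on the open interval is safe. Once these sign/truncation issues are pinned down, assembling \eqref{ineq-prop6-1} and \eqref{ineq-prop6-2} is immediate.
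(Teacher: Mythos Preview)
Your strategy coincides with the paper's: write $\mathcal{Q}_{s,n}$ as the tail $\sum_{j>n}w_j(s)\dfrac{x^j-x^{j/q}}{x^{1/q}-x}$, read the sign of the $w_j(s)$ off Theorem~\ref{theorem-lnF}, and then specialize $x=r^p$ with the extremal $s$ and the right truncation level~$n$. The paper is terser (it just says ``setting $x=r^p$ and simplifying''); your more detailed unpacking is helpful, but it contains a few slips to fix.

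First, the ratio $\dfrac{x^j-x^{j/q}}{x^{1/q}-x}$ is \emph{negative} on $(0,1)$ (numerator $<0$, denominator $>0$), not positive; so $\mathcal{Q}_{s,n}$ carries the \emph{opposite} sign to the tail coefficients $w_j(s)$. Your stated conclusions (e.g.\ ``$w_j\geq0\Rightarrow\mathcal{Q}_{s,n}<0$'') are in fact the correct ones, so only the intermediate sentence ``inherits the sign'' needs repair. Second, for \eqref{ineq-prop6-2} you have the truncation levels reversed: its lower bound has a first-power exponent $r^{p/q}-r^p$ and arises from $n=1$ with $s=a+b-c$ (here $w_1(a+b-c)=\tfrac{(c-a)(c-b)}{c}\neq0$ on $\mathfrak{R}_2$), whereas its upper bound has $r^{2p/q}-r^{2p}$ and comes from $n=2$ with $s=\tfrac{ab}{c}$ (where $w_1$ vanishes). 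Third, the value is $w_2\bigl(\tfrac{ab}{c}\bigr)=-\tfrac{ab(c-a)(c-b)}{2c^2(c+1)}$, with a minus sign; combined with $r^{2p}-r^{2p/q}<0$ this is exactly what produces the positive exponent in the lower side of \eqref{ineq-prop6-1}.
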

	\begin{proof}
	For $n\geq1$, rewrite $\mathcal{Q}_{s,n}(x)$ using the Maclaurin expansion of $\ln\mathcal{F}_s(x)$:
	\begin{align*}	\mathcal{Q}_{s,n}(x)&=\frac{\ln\mathcal{F}_s(x)-\ln\mathcal{F}_s(x^{1/q})}{x^{1/q}-x}-\sum_{j=1}^{n}w_j(s)\frac{x^j-x^{j/q}}{x^{1/q}-x}\\
		&=-\sum_{j=n+1}^{\infty}w_j(s)\frac{x^{j/q}-x^j}{x^{1/q}-x}.
	\end{align*}
	Since $q>1$, we have $x^{1/q}-x>0$ and  $x^{j/q}-x^j>0$ for all $x\in(0,1)$ and integers $j\geq1$.
	\begin{itemize}[leftmargin=1em]
		\item[$\circ$] For $(a,b,c)\in\mathfrak{R}_1$:  If $s\leq\max\{0,a+b-c\}$, Theorem \ref{theorem-lnF} implies $w_j(s)\geq0$ for $j\geq1$, so $\mathcal{Q}_{s,n}(x)<0$. If $s\geq\frac{ab}{c}$, Theorem \ref{theorem-lnF} implies $w_j(s)\leq0$ for $j\geq1$, so $\mathcal{Q}_{s,n}(x)>0$. Setting $x=r^p$ and simplifying yields  \eqref{ineq-prop6-1}.
	\item[$\circ$] For $(a,b,c)\in\mathfrak{R}_2$:  If $s\leq\frac{ab}{c}$, Theorem \ref{theorem-lnF} implies $w_j(s)\geq0$ for $j\geq1$, so $\mathcal{Q}_{s,n}(x)<0$. If $s\geq a+b-c$, Theorem \ref{theorem-lnF} implies $w_j(s)\leq0$, so $\mathcal{Q}_{s,n}(x)>0$. Setting $x=r^p$ and simplifying yields \eqref{ineq-prop6-2}.	
	\end{itemize}
	This completes the proof.
	\end{proof}
	
	\begin{remark}
	For $(a,b,c)\in\mathfrak{R}_1$,  $(c-a)(c-b)>0$, which implies
	$s_0-\frac{ab}{c}<0$ (either $s_0=0$ for $c\geq a+b$) or $s_0-\frac{ab}{c}=-\frac{(c-a)(c-b)}{c}<0$ for $c<a+b$).
	This shows inequality \eqref{ineq-prop6-1} strictly improves \cite[Eqs. (2.2)-(2.3)]{WZR-IJPAM-2023}. For $(a,b,c)\in\mathfrak{R}_2$, $(a-c)(c-b)>0$, so inequality \eqref{ineq-prop6-2} improves \cite[Eq. (2.4)]{WZR-IJPAM-2023}.
	\end{remark}

	\bigskip
	\begin{small}
	\begin{ack}
	We would like to thank the anonymous referees for their careful review of the manuscript.
	\end{ack}	
		
	\noindent{\bf Funding.}\ \ 	This research was supported by the National Natural Science Foundation of China (11971142) and the Natural Science Foundation of Zhejiang Province (LY19A010012).
		
	\medskip
		
		\noindent{\bf Data availability}\ \ Not applicable.
	\medskip
	
	\noindent{\bf\small Conflict of interest}\ \ \small{The author declares no conflict of interest.}
		\end{small}
	
\end{document}